\theoremstyle{plain}
\newtheorem{theorem}{Theorem}
\newtheorem{lemma}{Lemma}
\newtheorem{corollary}{Corollary}
\newtheorem{proposition}{Proposition}
\theoremstyle{definition}
\theoremstyle{remark}
\newtheorem{remark}{Remark}
\numberwithin{equation}{section}
\begin{document}
\title[Askey--Wilson polynomials with complex parameters.]{On the structure
and probabilistic interpretation of Askey--Wilson densities and polynomials
with complex parameters.}
\author{Pawe\l\ J. Szab\l owski}
\address{Department of Mathematics and Information Sciences,\\
Warsaw University of Technology\\
pl. Politechniki 1, 00-661 Warsaw, Poland}
\email{pawel.szablowski@gmail.com}
\date{July, 2010}
\subjclass[2010]{Primary 33D45, 05A30; Secondary 62H05, 60E05}
\keywords{Askey--Wilson density, Askey--Wilson polynomials, q-Hermite
polynomials, Al-Salam--Chihara polynomials, Poisson--Mehler expansion, }

\begin{abstract}
We give equivalent forms of the Askey--Wilson polynomials expressing them
with the help of the Al-Salam--Chihara polynomials. After restricting
parameters of the Askey--Wilson polynomials to complex conjugate pairs we
expand the Askey--Wilson weight function in the series similar to the
Poisson--Mehler expansion formula and give its probabilistic interpretation.
In particular this result can be used to calculate explicit forms of
'q-Hermite' moments of the Askey--Wilson density, hence enabling calculation
of all moments of the Askey--Wilson density. On the way (by setting certain
parameter $q$ to $0$) we get some formulae useful in the rapidly developing
so called 'free probability'.
\end{abstract}

\thanks{The author is grateful to unknown referee for pointing out
simplifications of some proofs and errors in paper's English.}
\maketitle

\section{Introduction}

The aim of this paper is to present some properties of the Askey--Wilson
(briefly AW) polynomials and their weight function. This is the function
that makes these polynomials orthogonal. As it is well known (see e.g. \cite%
{IA}) the AW polynomials are characterized by 5 parameters one of which is
special, traditionally denoted by $q$, often called a base. In majority of
cases $-1<q\leq 1$. The parameter $q$ plays a special r\^{o}le that will be
exposed in the sequel. The remaining 4 parameters can be either real or
complex but forming conjugate pairs. If products of all pairs of these 4
parameters have absolute values less than 1 then there exists a positive
measure on a compact segment that makes the AW polynomials orthogonal. If
absolute values of all parameters are less than 1 then this measure has a
density. If all $4$ parameters are complex and are in conjugate pairs then
the AW weight can be scaled to be the probability density having nice
probabilistic interpretation. This density for $q\allowbreak =\allowbreak 1$
is nothing else but one of the conditional densities of certain $3$
dimensional jointly Normal distribution. We will explain it in the sequel.
Because of these interpretations our main concern will be with the complex
parameter case. In particular our main result will allow expansion of the AW
density in certain series of the so called $q-$Hermite polynomials. The
expansion is analogous to the Poisson--Mehler series.

However to present briefly and clearly our results we have to refer to the $%
q $-series theory and some of its basic notions. Although the $q-$series
theory has links with combinatorics, non-commutative analysis and
probability theory it is not widely known. That is why we will recall some
notions and facts concerning it. Our considerations and calculations are
simple and in fact elementary.

Traditionally the AW polynomials (see e.g. \cite{AW85} or \cite{IA} or \cite%
{Koek}) are defined through the finite $q-$hypergeometric series. More
precisely the $n-$th AW polynomial $D_{n}$ is defined by:%
\begin{equation*}
D_{n}\left( x|a,b,c,d,q\right) =\frac{\left( ab,ac,ad\right) _{n}}{%
a^{n}(abcdq^{n-1})_{n}}~_{4}\phi _{3}(\QATOPD. \vert
{q^{-n},abcdq^{n-1},ae^{-i\theta },ae^{i\theta }}{ab,ac,ad,q}q,q),
\end{equation*}%
where $_{4}\phi _{3}$ is the $q-$hypergeometric series defined by 
\begin{equation*}
_{r}\phi _{s}\left( \QATOPD. \vert {a_{1},\ldots ,a_{r}}{b_{1},\ldots
,b_{s},q}q,y\right) =\sum_{k=0}^{\infty }\frac{\left( a_{1},\ldots
,a_{r}\right) _{k}}{\left( b_{1},\ldots ,b_{s},q\right) _{k}}%
(-1)^{s+1-r}q^{(s+1-r)\binom{k}{2}}y^{k},
\end{equation*}%
where $\binom{n}{k}$ is the binomial coefficient and $x\allowbreak
=\allowbreak \cos \theta .$ $\left( a_{1},\ldots ,a_{r}\right) _{k}$ and $%
\left( b_{1},\ldots ,b_{s},q\right) _{k}$ as well as $\left( ab,ac,ad\right)
_{n}$ and $(abcdq^{n-1})_{n}$ are explained at the beginning of the next
section.

The above mentioned form is difficult to use and analyze by those who do not
work in the special function theory. On the other hand due to pioneering
works of Bo\.{z}ejko et al. \cite{Bo} and also of Bryc et al. \cite{bryc1} , 
\cite{bms}, \cite{BryMaWe}, \cite{BryWe}, \cite{BryWes10}, the Askey--Wilson
polynomials and some of their subclasses have nice, clear and classical
probabilistic interpretation. Hence interest in this family of polynomials
has not only been among specialists in special functions or those working in
the theory orthogonal polynomials, but also among specialists in the
probability theory both non-commutative and classical. Not to mention people
working in quantum mechanics or quantum groups (see e.g. \cite{Floreanini97}%
).

By setting $q\allowbreak =\allowbreak 0$ we enter the world of rapidly
developing so called 'free probability' (see e.g. \cite{Voi}, \cite{Voi2}, 
\cite{Nicu}).

The family of probabilistic models, where the AW polynomials and densities
appear, has $5$ parameters and is very versatile. Hence it can be used in a
brief descriptions of various, complicated statistical models.

We will express the Askey--Wilson polynomials as certain combinations of
simpler polynomials (the Al-Salam--Chihara polynomials introduced in section %
\ref{pomoc}). Especially simple form of the AW polynomials will be obtained
in the special case of complex, grouped in conjugate pairs, parameters.

The paper is organized as follows. Notation and known results of the $q-$%
series theory that will be of help in further calculations are presented in
Section \ref{pomoc}. Next Section \ref{glowne} presents our main results.
The following short Section \ref{open} presents some immediate open
problems. The lengthy proofs of some of the results are collected in the
last Section \ref{dowody}.

\section{Auxiliary results\label{pomoc}}

Assume that $-1<q\leq 1.$ We will use traditional notation of the $q-$series
theory i.e. $\left[ 0\right] _{q}\allowbreak =\allowbreak 0,$ $\left[ n%
\right] _{q}\allowbreak =\allowbreak 1+q+\ldots +q^{n-1}\allowbreak ,$ $%
\left[ n\right] _{q}!\allowbreak =\allowbreak \prod_{i=1}^{n}\left[ i\right]
_{q},$ with $\left[ 0\right] _{q}!\allowbreak =1,\QATOPD[ ] {n}{k}%
_{q}\allowbreak =\allowbreak \left\{ 
\begin{array}{ccc}
\frac{\left[ n\right] _{q}!}{\left[ n-k\right] _{q}!\left[ k\right] _{q}!} & 
, & n\geq k\geq 0 \\ 
0 & , & otherwise%
\end{array}%
\right. $.

It will be also helpful to use the so called $q-$Pochhammer symbol defined
for $n\geq 1$ by$:\left( a;q\right) _{n}=\prod_{i=0}^{n-1}\left(
1-aq^{i}\right) ,$ with $\left( a;q\right) _{0}=1$ , $\left(
a_{1},a_{2},\ldots ,a_{k};q\right) _{n}\allowbreak =\allowbreak
\prod_{i=1}^{k}\left( a_{i};q\right) _{n}$.

Often $\left( a;q\right) _{n}$ as well as $\left( a_{1},a_{2},\ldots
,a_{k};q\right) _{n}$ will be abbreviated to $\left( a\right) _{n}$ and $%
\left( a_{1},a_{2},\ldots ,a_{k}\right) _{n},$ if it will not cause
misunderstanding.

In particular it is easy to notice that $\left( q\right) _{n}=\left(
1-q\right) ^{n}\left[ n\right] _{q}!$ and that\newline
$\QATOPD[ ] {n}{k}_{q}\allowbreak =$\allowbreak $\allowbreak \left\{ 
\begin{array}{ccc}
\frac{\left( q\right) _{n}}{\left( q\right) _{n-k}\left( q\right) _{k}} & ,
& n\geq k\geq 0 \\ 
0 & , & otherwise%
\end{array}%
\right. $. \newline
Let us remark that $\left[ n\right] _{1}\allowbreak =\allowbreak n,\left[ n%
\right] _{1}!\allowbreak =\allowbreak n!,$ $\QATOPD[ ] {n}{k}_{1}\allowbreak
=\allowbreak \binom{n}{k},$ $\left( a;1\right) _{n}\allowbreak =\allowbreak
\left( 1-a\right) ^{n}$ and $\left[ n\right] _{0}\allowbreak =\allowbreak
\left\{ 
\begin{array}{ccc}
1 & if & n\geq 1 \\ 
0 & if & n=0%
\end{array}%
\right. ,$ $\left[ n\right] _{0}!\allowbreak =\allowbreak 1,$ $\QATOPD[ ] {n%
}{k}_{0}\allowbreak =\allowbreak 1,$ $\left( a;0\right) _{n}\allowbreak
=\allowbreak \left\{ 
\begin{array}{ccc}
1 & if & n=0 \\ 
1-a & if & n\geq 1%
\end{array}%
\right. .$

In the sequel we will use the following two simple properties of the $q-$%
Pochhammer symbol.

\begin{lemma}
\label{nawiasy}i) For $-1<q\leq 1,a\in \mathbb{R},n\geq 0:\sum_{i=0}^{n}%
\QATOPD[ ] {n}{i}_{q}a^{i}\left( a\right) _{n-i}\allowbreak =\allowbreak 1,$

ii) For $-1<q\leq 1,a,b\in \mathbb{R},n\geq 0:$\newline
$\sum_{i=0}^{n}\left( -1\right) ^{i}q^{\binom{i}{2}}\QATOPD[ ] {n}{i}%
_{q}\left( a\right) _{i}b^{i}\left( abq^{i}\right) _{n-i}\allowbreak
=\allowbreak \left( b\right) _{n}.$
\end{lemma}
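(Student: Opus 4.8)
The plan is to prove (i) first by a short induction and then to deduce (ii) from (i) together with the $q$-binomial theorem $(z)_{n}=\sum_{k=0}^{n}(-1)^{k}q^{\binom{k}{2}}\QATOPD[ ] {n}{k}_{q}z^{k}$.

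For (i), I would write $S_{n}(a)=\sum_{i=0}^{n}\QATOPD[ ] {n}{i}_{q}a^{i}(a)_{n-i}$ and induct on $n$, the base case $S_{0}(a)=1$ being immediate. For the inductive step I insert the $q$-Pascal rule $\QATOPD[ ] {n}{i}_{q}=\QATOPD[ ] {n-1}{i}_{q}+q^{n-i}\QATOPD[ ] {n-1}{i-1}_{q}$ together with the one-step factorization $(a)_{n-i}=(1-aq^{n-1-i})(a)_{n-1-i}$. The contribution of the first Pascal term then splits as $S_{n-1}(a)$ minus $a\sum_{i=0}^{n-1}q^{n-1-i}\QATOPD[ ] {n-1}{i}_{q}a^{i}(a)_{n-1-i}$, whereas the contribution of the second Pascal term, after the shift $i\mapsto i+1$, equals precisely $a\sum_{i=0}^{n-1}q^{n-1-i}\QATOPD[ ] {n-1}{i}_{q}a^{i}(a)_{n-1-i}$. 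These cancel, so $S_{n}(a)=S_{n-1}(a)$ and hence $S_{n}(a)=1$ for all $n$.

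For (ii) the first point to notice is that the inner symbol $(abq^{i})_{n-i}=\prod_{l=i}^{n-1}(1-abq^{l})$ depends on $i$ both through its base and through its length, so rather than trying to peel it off I would expand it by the $q$-binomial theorem as $(abq^{i})_{n-i}=\sum_{k=0}^{n-i}(-1)^{k}q^{\binom{k}{2}}\QATOPD[ ] {n-i}{k}_{q}(ab)^{k}q^{ik}$. Substituting this into the left-hand side of (ii) and setting $m=i+k$, three elementary identities collapse the resulting double sum: the subset-of-a-subset rule $\QATOPD[ ] {n}{i}_{q}\QATOPD[ ] {n-i}{k}_{q}=\QATOPD[ ] {n}{m}_{q}\QATOPD[ ] {m}{i}_{q}$, the quadratic relation $\binom{i}{2}+\binom{m-i}{2}+i(m-i)=\binom{m}{2}$, and the sign merge $(-1)^{i+k}=(-1)^{m}$. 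Collecting the powers of $a$ and $b$, the left-hand side becomes \[\sum_{m=0}^{n}(-1)^{m}q^{\binom{m}{2}}\QATOPD[ ] {n}{m}_{q}b^{m}\Bigl(\sum_{i=0}^{m}\QATOPD[ ] {m}{i}_{q}(a)_{i}a^{m-i}\Bigr).\]

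The inner sum is nothing but the sum of part (i) read through the symmetry $\QATOPD[ ] {m}{i}_{q}=\QATOPD[ ] {m}{m-i}_{q}$ (substitute $i\mapsto m-i$), hence it equals $1$. The outer sum is then exactly the $q$-binomial theorem applied to $z=b$, giving $(b)_{n}$, which finishes the proof. I expect the only delicate part to be the bookkeeping in (ii): one must check that the reindexing $m=i+k$ and the three combinatorial identities really eliminate all dependence of the $q$-exponent and of the $a$-power on $k$, so that the inner sum decouples into the part-(i) form; granting this, both the evaluation of the inner sum and the final $q$-binomial summation are routine. As an independent check, factoring out $(ab)_{n}$ via $(abq^{i})_{n-i}=(ab)_{n}/(ab)_{i}$ turns the left-hand side into $(ab)_{n}$ times a terminating $_{2}\phi_{1}(q^{-n},a;ab;q,bq^{n})$, which evaluates to $(b)_{n}/(ab)_{n}$ by the $q$-Chu--Vandermonde summation and reproduces $(b)_{n}$.
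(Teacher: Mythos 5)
Your proof is correct. For part (ii) you take essentially the paper's route: expand $(abq^{i})_{n-i}$ by the terminating $q$-binomial theorem, reindex with $m=i+k$ using the subset-of-a-subset rule together with $\binom{i}{2}+\binom{m-i}{2}+i(m-i)=\binom{m}{2}$, recognize part (i) in the inner sum, and close with one more application of the $q$-binomial theorem; the paper's inner sum is yours read backwards, so the symmetry substitution you mention is the only cosmetic difference, and your terminating ${}_{2}\phi_{1}$ cross-check via $q$-Chu--Vandermonde is also sound. Part (i) is where you genuinely diverge. The paper proves it by forming the generating function $\sum_{n}D_{n}(a)t^{n}/(q)_{n}$, evaluating it to $1/(t)_{\infty}$ via a Cauchy product and the infinite $q$-binomial theorem, and then extending the resulting identity from $|a|<1$ to all $a$ by observing that $D_{n}(a)$ is a polynomial in $a$. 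Your induction via the $q$-Pascal rule and the one-step factorization $(a)_{n-i}=(1-aq^{n-1-i})(a)_{n-1-i}$ reaches $S_{n}=S_{n-1}$ directly; it is more elementary, needs no convergence discussion or polynomial-extension step, and works verbatim at $q=1$, where it reduces to the ordinary binomial identity $\sum_{i}\binom{n}{i}a^{i}(1-a)^{n-i}=1$. What the paper's method buys in exchange is the generating-function evaluation itself, which your argument does not produce as a by-product.
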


\begin{proof}
An easy proof based on the so called $q-$binomial theorem (compare Thm.
10.2.1 of \cite{Andrews1999} or Thm. 12.2.5 of \cite{IA}) is shifted to the
section \ref{dowody}.
\end{proof}

Let us define the following sets of polynomials:

The $q-$Hermite polynomials defined by: 
\begin{equation}
h_{n+1}(x|q)=2xh_{n}(x|q)-(1-q^{n})h_{n-1}(x|q),  \label{q-cont}
\end{equation}%
for $n\geq 1,$ with $h_{-1}(x|q)=0,$ $h_{0}(x|q)=1$. The polynomials $h_{n}$
are also often called continuous $q-$Hermite polynomials. However we will
more frequently use the following transformed form of polynomials $h_{n},$
namely the polynomials: 
\begin{equation*}
H_{n}\left( x|q\right) \allowbreak =\allowbreak (1-q)^{-n/2}h_{n}\left( 
\frac{x\sqrt{1-q}}{2}|q\right) .
\end{equation*}%
We will call them also $q-$Hermite. The name is justified since one can
easily show that $H_{n}\left( x|1\right) \allowbreak =\allowbreak
H_{n}\left( x\right) ,$ where $H_{n}$ denotes the $n-$th ordinary, so called
probabilistic Hermite polynomial. More precisely the polynomials $\left\{
H_{n}\right\} _{n\geq -1}$ satisfy $3-$term recurrence (\ref{_1}), below: 
\begin{equation}
H_{n+1}\left( x\right) =xH_{n}\left( x\right) -nH_{n-1}(x),  \label{_1}
\end{equation}%
with $H_{0}\left( x\right) \allowbreak =\allowbreak H_{1}\left( x\right)
\allowbreak =\allowbreak 1$. Hence they are orthogonal with respect to the
measure with the density equal to $\exp \left( -x^{2}/2\right) /\sqrt{2\pi }%
. $

The polynomials $\left\{ H_{n}\left( x|q\right) \right\} $ satisfy the
following $3-$term recurrence%
\begin{equation}
H_{n+1}\left( x|q\right) \allowbreak =\allowbreak xH_{n}\left( x|q\right) - 
\left[ n\right] _{q}H_{n-1}\left( x\right) ,  \label{He}
\end{equation}%
with $H_{-1}\left( x|q\right) \allowbreak =\allowbreak 0$, $H_{1}\left(
x|q\right) \allowbreak =\allowbreak 1$.

We shall also use the following polynomials called Al-Salam--Chihara (ASC
polynomials). As before, in the literature connected with the special
functions or orthogonal polynomials as the ASC polynomials function
polynomials defined recursively:%
\begin{equation}
Q_{n+1}\left( x|a,b,q\right) \allowbreak =\allowbreak
(2x-(a+b)q^{n})Q_{n}\left( x|a,b,q\right)
-(1-abq^{n-1})(1-q^{n})Q_{n-1}(x|a,b,q),  \label{AlSC1}
\end{equation}%
with $Q_{-1}\left( x|a,b,q\right) \allowbreak =\allowbreak 0,$ $Q_{0}\left(
x|a,b,q\right) \allowbreak =\allowbreak 1$.

We will more often use these polynomials re-scaled, with new parameters $%
\rho $ and $y$ defined by: 
\begin{equation*}
a\allowbreak =\allowbreak \frac{\sqrt{1-q}}{2}\rho (y\allowbreak
-\allowbreak i\sqrt{\frac{4}{1-q}-y^{2}}),b\allowbreak =\allowbreak \frac{%
\sqrt{1-q}}{2}\rho (y\allowbreak +\allowbreak i\sqrt{\frac{4}{1-q}-y^{2}}),
\end{equation*}%
such that $y^{2}\leq 4/(1-q),$ $\left\vert \rho \right\vert <1$. In the
formula above $i$ stands for the imaginary unit.

More precisely we will consider the polynomials 
\begin{equation}
(1-q)^{n/2}P_{n}\left( x|y,\rho ,q\right) \allowbreak =\allowbreak
Q_{n}\left( x\sqrt{1-q}/2|\frac{\sqrt{1-q}}{2}\rho (y\allowbreak
-\allowbreak i\sqrt{\frac{4}{1-q}-y^{2}}),\frac{\sqrt{1-q}}{2}\rho
(y\allowbreak +\allowbreak i\sqrt{\frac{4}{1-q}-y^{2}}),q\right) .
\label{podstawienie}
\end{equation}%
One shows that polynomials $\left\{ P_{n}\right\} $ satisfy the following $3-
$term recurrence:

\begin{equation}
P_{n+1}(x|y,\rho ,q)=(x-\rho yq^{n})P_{n}(x|y,\rho ,q)-(1-\rho
^{2}q^{n-1})[n]_{q}P_{n-1}(x|y,\rho ,q),  \label{AlSC}
\end{equation}%
with $P_{-1}\left( x|y,\rho ,q\right) \allowbreak =\allowbreak 0,$ $%
P_{0}\left( x|y,\rho ,q\right) \allowbreak =\allowbreak 1$. The polynomials $%
\left\{ P_{n}\right\} $ have nice probabilistic interpretation see e.g. \cite%
{bms}. To support intuition let us remark that 
\begin{equation*}
P_{n}\left( x|y,\rho ,1\right) =(1-\rho ^{2})^{n/2}H_{n}\left( \frac{x-\rho y%
}{\sqrt{1-\rho ^{2}}}\right) .
\end{equation*}

The polynomials (\ref{He}) satisfy the following very useful identity
originally formulated for the continuous $q-$Hermite polynomials $h_{n}$
(can be found in e.g. \cite{IA} Thm. 13.1.5) and here, below presented for
the polynomials $H_{n}$:

\begin{equation}
H_{n}\left( x|q\right) H_{m}\left( x|q\right) =\sum_{j=0}^{\min \left(
n,m\right) }\QATOPD[ ] {m}{j}_{q}\QATOPD[ ] {n}{j}_{q}\left[ j\right]
_{q}!H_{n+m-2k}\left( x|q\right) .  \label{identity}
\end{equation}%
Let us denote for simplicity the following real subsets:%
\begin{equation}
S\left( q\right) =\left\{ 
\begin{array}{ccc}
\lbrack -2/\sqrt{1-q},2/\sqrt{1-q}] & if & \left\vert q\right\vert <1 \\ 
\mathbb{R} & if & q=1%
\end{array}%
\right. ,  \label{S(q)}
\end{equation}%
and the following family of quadratic, auxiliary, polynomials: 
\begin{equation}
w_{k}\left( x,y|\rho ,q\right) =(1-\rho ^{2}q^{2k})^{2}-(1-q)\rho
q^{k}(1+\rho ^{2}q^{2k})xy+(1-q)\rho ^{2}(x^{2}+y^{2})q^{2k},  \label{w_k}
\end{equation}%
$k\allowbreak =\allowbreak 0,1,2,\ldots $ . Notice that $\forall k\geq 0:$ $%
w_{k}\left( x,y|\rho ,q\right) \allowbreak =\allowbreak w_{0}\left( x,y|\rho
q^{k},q\right) $ and that $w_{k}\left( x,y|0,q\right) \allowbreak
=\allowbreak 1$

It is known (see e.g. \cite{bryc1}, but also \cite{IA} Thm. 13.1.3 with an
obvious modification for the polynomials $H_{n}$) that the $q-$Hermite
polynomials are monic and orthogonal with respect to the measure that has
the density given by:%
\begin{equation}
f_{N}\left( x|q\right) =\frac{\sqrt{1-q}\left( q\right) _{\infty }}{2\pi 
\sqrt{4-(1-q)x^{2}}}\prod_{k=0}^{\infty }\left(
(1+q^{k})^{2}-(1-q)x^{2}q^{k}\right) I_{S\left( q\right) }\left( x\right) ,
\label{qN}
\end{equation}%
defined for $\left\vert q\right\vert <1,$ $x\in \mathbb{R},$ where 
\begin{equation*}
I_{A}\left( x\right) \allowbreak =\allowbreak \left\{ 
\begin{array}{ccc}
1 & if & x\in A \\ 
0 & if & x\notin A%
\end{array}%
\right. .
\end{equation*}%
We will set also 
\begin{equation}
f_{N}\left( x|1\right) \allowbreak =\allowbreak \frac{1}{\sqrt{2\pi }}\exp
\left( -x^{2}/2\right) .  \label{q=1}
\end{equation}

Similarly it is known (e.g. from \cite{bms} and also \cite{IA} formula
15.1.5 after re-scaling polynomials $Q_{n}$ to $P_{n}$) that the polynomials 
$\left\{ P_{n}\left( x|y,\rho ,q\right) \right\} _{n\geq -1}$ are monic and
orthogonal with respect to the measure that for $q\in (-1,1]$ and $%
\left\vert \rho \right\vert <1$ has the density. For $\left\vert
q\right\vert <1$ this density is given by: 
\begin{subequations}
\label{fCN}
\begin{equation}
f_{CN}\left( x|y,\rho ,q\right) =f_{N}\left( x|q\right) \prod_{k=0}^{\infty }%
\frac{(1-\rho ^{2}q^{k})}{w_{k}\left( x,y|\rho ,q\right) }I_{S\left(
q\right) }\left( x\right) ,  \label{2}
\end{equation}%
for $x\allowbreak \in \allowbreak \mathbb{R},$ $y\allowbreak \in \allowbreak
S\left( q\right) $ and for $q\allowbreak =\allowbreak 1$ is given by: 
\end{subequations}
\begin{equation*}
f_{CN}\left( x|y,\rho ,1\right) \allowbreak =\allowbreak \frac{1}{\sqrt{2\pi
\left( 1-\rho ^{2}\right) }}\exp \left( -\frac{\left( x-\rho y\right) ^{2}}{%
2\left( 1-\rho ^{2}\right) }\right) ,
\end{equation*}%
with $x,y\in \mathbb{R}.$

It is known (see e.g. \cite{IA} formula 13.1.10) that for $\left\vert
q\right\vert <1:$%
\begin{equation}
\sup_{x\in S\left( q\right) }\left\vert H_{n}\left( x|q\right) \right\vert
\leq s_{n}\left( q\right) \left( 1-q\right) ^{-n/2},  \label{ogr_H}
\end{equation}%
where 
\begin{equation}
s_{n}\left( q\right) \allowbreak =\allowbreak \sum_{i=0}^{n}\QATOPD[ ] {n}{i}%
_{q}.  \label{Wn}
\end{equation}

We will be studying the following density 
\begin{equation}
\phi \left( x|y,\rho _{1},z,\rho _{2},q\right) \allowbreak =\allowbreak
f_{N}\left( x|q\right) \frac{\left( \rho _{1}^{2},\rho _{2}^{2}\right)
_{\infty }}{\left( \rho _{1}^{2}\rho _{2}^{2}\right) _{\infty }}%
\prod_{k=0}^{\infty }\frac{w_{k}\left( y,z|\rho _{1}\rho _{2},q\right) }{%
w_{k}\left( x,y|\rho _{1},q\right) w_{k}\left( x,z|\rho _{2},q\right) },
\label{_x|yz}
\end{equation}%
where the polynomials $w_{k}\left( s,t|\rho ,q\right) $ are defined by (\ref%
{w_k}).

For $q\allowbreak =\allowbreak 1$ we set 
\begin{equation}
\phi \left( x|y,\rho _{1},z,\rho _{2},1\right) \allowbreak =\allowbreak 
\frac{1}{\sqrt{2\pi \frac{\left( 1-\rho _{1}^{2}\right) \left( 1-\rho
_{2}^{2}\right) }{1-\rho _{1}^{2}\rho _{2}^{2}}}}\exp \left( -\frac{\left( x-%
\frac{y\rho _{1}\left( 1-\rho _{2}^{2}\right) +z\rho _{2}\left( 1-\rho
_{1}^{2}\right) }{1-\rho _{1}^{2}\rho _{2}^{2}}\right) ^{2}}{2\frac{\left(
1-\rho _{1}^{2}\right) \left( 1-\rho _{2}^{2}\right) }{1-\rho _{1}^{2}\rho
_{2}^{2}}}\right) ,  \label{_N}
\end{equation}%
that is $\phi \left( x|y,\rho _{1},z,\rho _{2},1\right) $ is the density of
the Normal distribution \newline
$N\left( \frac{y\rho _{1}\left( 1-\rho _{2}^{2}\right) +z\rho _{2}\left(
1-\rho _{1}^{2}\right) }{1-\rho _{1}^{2}\rho _{2}^{2}},\frac{\left( 1-\rho
_{1}^{2}\right) \left( 1-\rho _{2}^{2}\right) }{1-\rho _{1}^{2}\rho _{2}^{2}}%
\right) .$

We have the following important but easy Remark.

\begin{remark}
\label{U1}i) $\phi \left( x|y,\rho _{1},z,\rho _{2},q\right) \allowbreak
=\allowbreak \frac{f_{CN}\left( z|x,\rho _{2},q\right) f_{CN}\left( x|y,\rho
_{1},q\right) f_{N}\left( y|q\right) }{f_{CN}\left( z|y,\rho _{1}\rho
_{2},q\right) f_{N}\left( y|q\right) }$, hence in particular $\phi \left(
x|y,0,z,\rho _{2},q\right) \allowbreak =\allowbreak f_{CN}\left( x|z,\rho
_{2},q\right) .$

ii) $\phi \left( x|y,\rho _{1},z,\rho _{2},q\right) \allowbreak =\allowbreak
\psi (\frac{\sqrt{1-q}}{2}x|a,b,c,d,q)$ where 
\begin{eqnarray}
a\allowbreak &=&\allowbreak \frac{\sqrt{1-q}}{2}\rho _{1}(y\allowbreak
-\allowbreak i\sqrt{\frac{4}{1-q}-y^{2}}),  \label{par1} \\
b\allowbreak &=&\allowbreak \frac{\sqrt{1-q}}{2}\rho _{1}(y\allowbreak
+\allowbreak i\sqrt{\frac{4}{1-q}-y^{2}}),  \label{par2} \\
c\mathbb{\allowbreak } &\mathbb{=}&\frac{\sqrt{1-q}}{2}\rho
_{2}(z\allowbreak -\allowbreak i\sqrt{\frac{4}{1-q}-z^{2}}),  \label{par3} \\
d\allowbreak &=&\allowbreak \frac{\sqrt{1-q}}{2}\rho _{2}(z\allowbreak
+\allowbreak i\sqrt{\frac{4}{1-q}-z^{2}}).  \label{par4}
\end{eqnarray}%
and $\psi (t|a,b,c,d,q)$ is a normalized (that is multiplied by a constant
so that its integral is $1$) weight function of the AW polynomials. Compare
e.g. \cite{AW85} or \cite{IA}. Again in the formulae (\ref{par1}),...,(\ref%
{par4}) $i$ denotes the imaginary unit.
\end{remark}

From assertion i) of the Remark above it follows that the properties of the
density $\phi $ are closely related to the properties of the densities $%
f_{CN}$ and $f_{N}.$ Hence now we will recall properties of these densities
and related to them families of the polynomials $\left\{ H_{n}\left(
x|q\right) \right\} _{n\geq -1}$ and $\left\{ P_{n}\left( x|y,\rho ,q\right)
\right\} _{n\geq -1}$ that are crucial for the main results of this paper.
We will collect them in the following two Propositions:

\begin{proposition}
\label{znane}i) For $n,m\geq 0:$%
\begin{equation*}
\int_{S\left( q\right) }H_{n}\left( x|q\right) H_{m}\left( x|q\right)
f_{N}\left( x|q\right) dx\allowbreak =\allowbreak \left\{ 
\begin{array}{ccc}
0 & when & n\neq m \\ 
\left[ n\right] _{q}! & when & n=m%
\end{array}%
\right. .
\end{equation*}

ii) For $n\geq 0:$%
\begin{equation*}
\int_{S\left( q\right) }H_{n}\left( x|q\right) f_{CN}\left( x|y,\rho
,q\right) dx=\rho ^{n}H_{n}\left( y|q\right) .
\end{equation*}

iii) For $n,m\geq 0:$%
\begin{equation*}
\int_{S\left( q\right) }P_{n}\left( x|y,\rho ,q\right) P_{m}\left( x|y,\rho
,q\right) f_{CN}\left( x|y,\rho ,q\right) dx\allowbreak =\allowbreak \left\{ 
\begin{array}{ccc}
0 & when & n\neq m \\ 
\left( \rho ^{2}\right) _{n}\left[ n\right] _{q}! & when & n=m%
\end{array}%
\right. .
\end{equation*}

iv) 
\begin{equation*}
\int_{S\left( q\right) }f_{CN}\left( x|y,\rho _{1},q\right) f_{CN}\left(
y|z,\rho _{2},q\right) dy=f_{CN}\left( x|z,\rho _{1}\rho _{2},q\right) .
\end{equation*}

v) For $|t|,\left\vert q\right\vert <1:$%
\begin{equation*}
\sum_{i=0}^{\infty }\frac{s_{i}\left( q\right) t^{i}}{\left( q\right) _{i}}%
\allowbreak =\allowbreak \frac{1}{\left( t\right) _{\infty }^{2}}%
,\sum_{i=0}^{\infty }\frac{s_{i}^{2}\left( q\right) t^{i}}{\left( q\right)
_{i}}\allowbreak =\allowbreak \frac{\left( t^{2}\right) _{\infty }}{\left(
t\right) _{\infty }^{4}},
\end{equation*}%
convergence is absolute, where $s_{i}\left( q\right) $ is defined by (\ref%
{Wn}).

vi) For $(1-q)\max (x^{2},y^{2})\leq 4,$ $\left\vert \rho \right\vert <1:$%
\begin{equation}
f_{CN}\left( x|y,\rho ,q\right) \allowbreak \allowbreak =\allowbreak
f_{N}\left( x|q\right) \sum_{n=0}^{\infty }\frac{\rho ^{n}}{[n]_{q}!}%
H_{n}(x|q)H_{n}(y|q),  \label{P-M}
\end{equation}
convergence is absolute in $\rho ,$ $y$ \& $x$ and uniform in $x$ and $y$.

vii) $\forall x,y\in S\left( q\right) :0<C\left( y,\rho ,q\right) \leq \frac{%
f_{CN}\left( x|y,\rho ,q\right) }{f_{N}\left( x|q\right) }\leq \frac{\left(
\rho ^{2}\right) _{\infty }}{\left( \rho \right) _{\infty }^{4}}.$
\end{proposition}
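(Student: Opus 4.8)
The plan is to treat the Poisson--Mehler expansion, item (vi), as the hub from which (ii) and (iv) follow, and to establish the two orthogonality statements (i), (iii) together with the summation identities (v) first, since (v) is exactly what governs the convergence in (vi) and the bound in (vii). I would begin by disposing of (i) and (iii): the measures $f_{N}(\cdot|q)$ and $f_{CN}(\cdot|y,\rho ,q)$ making $\{H_{n}\}$ and $\{P_{n}\}$ orthogonal are already quoted in the preceding text, so the only content is the squared norms. Since both families are monic, I would read the norms straight off the recurrences via the standard fact that for a monic family $p_{n+1}=(x-\alpha _{n})p_{n}-\lambda _{n}p_{n-1}$ with probability measure one has $\int p_{n}^{2}\,d\mu =\prod_{k=1}^{n}\lambda _{k}$. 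From (\ref{He}), $\lambda _{n}=[n]_{q}$ yields $\prod_{k=1}^{n}[k]_{q}=[n]_{q}!$, which is (i); from (\ref{AlSC}), $\lambda _{n}=(1-\rho ^{2}q^{n-1})[n]_{q}$ yields $\prod_{k=1}^{n}(1-\rho ^{2}q^{k-1})[k]_{q}=(\rho ^{2})_{n}[n]_{q}!$, which is (iii).

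Next I would prove (v), because its second identity is the engine behind all the convergence claims. Writing $\QATOPD[ ] {i}{j}_{q}=(q)_{i}/((q)_{j}(q)_{i-j})$ turns $s_{i}(q)/(q)_{i}$ into the convolution $\sum_{a+b=i}1/((q)_{a}(q)_{b})$; hence the first series is the Cauchy square of Euler's series $\sum_{j\geq 0}t^{j}/(q)_{j}=1/(t)_{\infty }$ (a case of the $q$-binomial theorem already invoked in Lemma \ref{nawiasy}), giving $1/(t)_{\infty }^{2}$. For the second series I would recognise $s_{i}(q)$ as the value at $1$ of the Rogers--Szeg\H{o} polynomial $\sum_{k}\QATOPD[ ] {i}{k}_{q}x^{k}$ and invoke the Carlitz (Mehler-type) bilinear generating function $\sum_{i}\frac{1}{(q)_{i}}(\sum_{k}\QATOPD[ ] {i}{k}_{q}x^{k})(\sum_{k}\QATOPD[ ] {i}{k}_{q}y^{k})t^{i}=(xyt^{2})_{\infty }/((t)_{\infty }(xt)_{\infty }(yt)_{\infty }(xyt)_{\infty })$; setting $x=y=1$ collapses the right side to $(t^{2})_{\infty }/(t)_{\infty }^{4}$. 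Absolute convergence for $|t|,|q|<1$ is immediate from the product forms.

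Then comes (vi). The identity itself is the Poisson--Mehler formula for $q$-Hermite polynomials, which I would take from the cited sources, so the genuine work is the stated convergence. Using the bound (\ref{ogr_H}), $\sup_{S(q)}|H_{n}|\leq s_{n}(q)(1-q)^{-n/2}$, together with $[n]_{q}!=(q)_{n}/(1-q)^{n}$, the $n$-th term is dominated by $s_{n}^{2}(q)|\rho |^{n}/(q)_{n}$, whose sum is finite for $|\rho |<1$ by the second identity of (v); this gives absolute convergence, and since the dominating bound is free of $x,y$, uniformity in $x,y$. With (vi) in hand, (ii) follows by multiplying the expansion by $H_{m}(x|q)$, integrating in $x$, and using the orthogonality (i) to annihilate all terms but $n=m$; and (iv) follows by substituting the expansion for both factors, integrating in $y$, and again using (i), which collapses the double sum to $\sum_{n}(\rho _{1}\rho _{2})^{n}H_{n}(x|q)H_{n}(z|q)/[n]_{q}!=f_{CN}(x|z,\rho _{1}\rho _{2},q)$ by a second application of (vi). The uniform convergence from (vi) legitimises the interchange of sum and integral at each step.

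Finally (vii). The upper bound is exactly the dominating series of (vi): $f_{CN}(x|y,\rho ,q)/f_{N}(x|q)=\sum_{n}\rho ^{n}H_{n}(x|q)H_{n}(y|q)/[n]_{q}!\leq \sum_{n}s_{n}^{2}(q)\rho ^{n}/(q)_{n}=(\rho ^{2})_{\infty }/(\rho )_{\infty }^{4}$ by (v). For the strictly positive lower bound $C(y,\rho ,q)$ I would instead use the product representation $f_{CN}/f_{N}=\prod_{k\geq 0}(1-\rho ^{2}q^{k})/w_{k}(x,y|\rho ,q)$ from (\ref{2}): each factor is positive and continuous, the product converges uniformly on the compact set $S(q)\times S(q)$ (for $|q|<1$), so the ratio is a strictly positive continuous function of $x$ on a compact set and hence attains a positive minimum. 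I expect the main obstacle to be the second summation identity in (v): securing it cleanly requires the Carlitz bilinear formula (or an equivalent $q$-series manipulation), and everything downstream---the convergence of the Poisson--Mehler expansion and the sharp upper bound in (vii)---rests on it.
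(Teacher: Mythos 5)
Your proposal is correct, but it is organized rather differently from the paper's treatment, which for items (i)--(vi) consists almost entirely of citations (formula 13.1.11, Exercise 15.7, formula 15.1.5, Exercises 12.2(b)--(c) and the Poisson--Mehler formula in Ismail's book, plus (2.6) of Bryc--Matysiak--Szab\l owski) and only argues (vii) in detail. Where you supply actual derivations, they are sound and arguably more self-contained: reading the squared norms in (i) and (iii) off the three-term recurrences via $\|p_n\|^2=\lambda_n\|p_{n-1}\|^2$ is exactly right (given the orthogonality already quoted in the surrounding text), the Cauchy-square and Rogers--Szeg\H{o}/Carlitz bilinear arguments for (v) are the standard proofs of the two cited exercises, and deducing (ii) and (iv) from (vi) plus (i) by termwise integration is a clean alternative to citing Bryc's papers --- the uniform convergence you establish in (vi) does justify the interchange. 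The one place where your route genuinely diverges in substance is the lower bound in (vii): the paper bounds each $w_k(x,y|\rho,q)$ explicitly from above (using that the nonnegative quadratic in $x$ attains its maximum over $S(q)$ at the endpoints) and thereby produces a concrete formula for $C(y,\rho,q)$ as an infinite product, whereas you argue by continuity and compactness of $S(q)$ that a positive minimum exists. Your soft argument is valid for $|q|<1$ (one does need to check, as you implicitly do, that $w_k>0$ on $S(q)\times S(q)$, which holds because the zero of the quadratic lies at $|x|\geq\frac{1}{2}|\rho^{-1}q^{-k}+\rho q^{k}|\,|y|>|y|$ whenever the second term of $w_k$ vanishes), but it buys only existence of $C(y,\rho,q)$, while the paper's computation gives an explicit constant; since the proposition only asserts $0<C(y,\rho,q)$, both suffice.
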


\begin{proof}
i) It is formula 13.1.11 of \cite{IA} with an obvious modification for the
polynomials $H_{n}$ instead of $h_{n}$ (compare (\ref{q-cont})) and the
normalized weight function (i.e. $f_{N})$ ii) Exercise 15.7 of \cite{IA}
also in \cite{bryc1}, iii) Formula 15.1.5 of \cite{IA} with an obvious
modification for the polynomials $P_{n}$ instead of $Q_{n}$ and the
normalized weight function (i.e. $f_{CN}),$ iv) see (2.6) of \cite{bms}. v)
Exercise 12.2(b) and 12.2(c) of \cite{IA}. vi) It is the famous
Poisson--Mehler formula (see e.g. \cite{IA}, for the simple proof of it see 
\cite{Szab4}).

vii) The upper limit follows directly (\ref{P-M}) and assertion v). To get
the lower one let us notice that from (\ref{fCN}) we have: $\frac{%
f_{CN}\left( x|y,\rho ,q\right) }{f_{N}\left( x|q\right) }\allowbreak =$%
\newline
$\allowbreak \prod_{k=0}^{\infty }\frac{1-\rho ^{2}q^{k}}{w_{k}\left(
x,y|\rho ,q\right) }$. Now let us notice also that $w_{k}\left( x,y|\rho
,q\right) \allowbreak =\allowbreak (1-q)\rho ^{2}q^{2k}(x\allowbreak
-\allowbreak (\rho ^{-1}q^{-k}\allowbreak +\allowbreak \rho
q^{k})y/2)^{2}\allowbreak +\allowbreak (1-\rho
^{2}q^{2k})^{2}(1-(1-q)y^{2}/4)\geq 0$. As a nonnegative quadratic form this
expression assumes its maximum value for $x\in S\left( q\right) $ at the
ends of this interval, so $(1-\rho ^{2}q^{2k})^{2}\allowbreak -\allowbreak
(1-q)\rho q^{k}(1+\rho ^{2}q^{2k})xy\allowbreak +\allowbreak (1-q)\rho
^{2}(x^{2}+y^{2})q^{2k}\allowbreak \leq $\allowbreak\ $(1-\rho
^{2}q^{2k})^{2}\allowbreak +\allowbreak 2\left( 1-q\right) (1+\rho
^{2}q^{2k})|y\rho q^{k}|\allowbreak +\allowbreak 4\rho ^{2}q^{2k}\allowbreak
+\allowbreak (1-q)\rho ^{2}y^{2}q^{2k}\allowbreak =\allowbreak (1+\rho
^{2}q^{2k})^{2}\allowbreak +\allowbreak 2\left( 1-q\right) (1+\rho
^{2}q^{2k})|y\rho q^{k}|\allowbreak +\allowbreak (1-q)\rho ^{2}y^{2}q^{2k}$.
Hence $\frac{f_{CN}\left( x|y,\rho ,q\right) }{f_{N}\left( x|q\right) }%
\allowbreak \geq \allowbreak \frac{\left( \rho ^{2}\right) _{\infty }}{%
\prod_{k=0}^{\infty }(1+\rho ^{2}q^{2k})^{2}\allowbreak +\allowbreak 2\left(
1-q\right) (1+\rho ^{2}q^{2k})|y\rho q^{k}|\allowbreak +\allowbreak
(1-q)\rho ^{2}y^{2}q^{2k}}\allowbreak \overset{df}{=}\allowbreak C\left(
y,\rho ,q\right) $.
\end{proof}

\begin{remark}
From the assertion v) of the Lemma above it follows that $\phi \left(
x|y,\rho _{1},z,\rho _{2},q\right) $ is the conditional density of $X|Y,Z$
if the joint density of $(Y,X,Z)$ is equal to \newline
$f_{N}\left( y|q\right) f_{CN}\left( x|y,\rho _{1},q\right) f_{CN}\left(
z|x,\rho _{2},q\right) .$ It is so since then the marginal density of $%
\left( Y,Z\right) $ is equal to $f_{N}\left( y|q\right) f_{CN}\left(
z|y,\rho _{1}\rho _{2},q\right) $ (which follows directly from assertion iv)
of Proposition \ref{znane}.
\end{remark}

Properties of the polynomial sets $\left\{ H_{n}\left( x|q\right) \right\}
_{n\geq -1}$ and $\left\{ P_{n}\left( x|y,\rho ,q\right) \right\} _{n\geq
-1} $ are collected in the second proposition.

We will use also the following, auxiliary set of polynomials $\left\{
B_{n}\left( x|q\right) \right\} _{n\geq -1}$ defined by:%
\begin{equation}
B_{n+1}\left( y|q\right) \allowbreak =\allowbreak -q^{n}yB_{n}\left(
y|q\right) +q^{n-1}\left[ n\right] _{q}B_{n-1}\left( y|q\right) ;n\geq 0,
\label{_B}
\end{equation}%
with $B_{-1}\left( y|q\right) =0,$ $B_{0}\left( y|q\right) =1$. The
polynomials $\left\{ B_{n}\right\} _{n\geq -1}$ with this normalization were
introduced and some of their basic properties were exposed in \cite{bms}.
However they were known earlier with different scaling and normalization
(see e.g. \cite{ASK89} or \cite{IsMas94} where polynomials $h_{n}\left(
y|q^{-1}\right) $ are analyzed). In particular it was shown in \cite{bms}
that $B_{n}\left( x|1\right) \allowbreak =\allowbreak i^{n}H_{n}\left(
ix\right) $. We will need also these polynomials with a another scaling and
normalization and also some additional properties of them. Namely we will
need 'continuous version' of these polynomials: 
\begin{equation*}
b_{n}\left( y|q\right) \allowbreak =\allowbreak (1-q)^{n/2}B_{n}\left( 2y/%
\sqrt{1-q}|q\right) .
\end{equation*}%
It is elementary to notice that the polynomials $b_{n}$ satisfy $3-$term
recurrence:%
\begin{equation}
b_{n+1}\left( y|q\right) =-2q^{n}yb_{n}\left( y|q\right)
+q^{n-1}(1-q^{n})b_{n-1}\left( y|q\right) ,  \label{maleB}
\end{equation}%
with $b_{-1}\left( y|q\right) \allowbreak =\allowbreak 0,$ $b_{0}\left(
y|q\right) \allowbreak =\allowbreak 1$. Further let us notice (comparing (%
\ref{maleB}) and (\ref{q-cont})) that 
\begin{equation}
(-1)^{n}q^{-\binom{n}{2}}b_{n}\left( y|q\right) \allowbreak =\allowbreak
h_{n}\left( y|q^{-1}\right) .  \label{h(q^-1)}
\end{equation}

\begin{proposition}
\label{connection}i) $\forall n\geq 1:P_{n}\left( x|y,\rho ,q\right)
=\sum_{j=0}^{n}\QATOPD[ ] {n}{j}\rho ^{n-j}B_{n-j}\left( y|q\right)
H_{j}\left( x|q\right) ,$

ii) $\forall n>0:\sum_{j=0}^{n}\QATOPD[ ] {n}{j}B_{n-j}\left( x|q\right)
H_{j}\left( x|q\right) =0,$

iii) $\forall n\geq 0:H_{n}\left( x|q\right) =\sum_{j=0}^{n}\QATOPD[ ] {n}{j}%
\rho ^{n-j}H_{n-j}\left( y|q\right) P_{j}\left( x|y,\rho ,q\right) .$
\end{proposition}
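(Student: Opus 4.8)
The plan is to prove part i) directly by induction on $n$, and then to deduce ii) and iii) from i) with essentially no extra work. Throughout I will use that the monic polynomials $\{H_j(x|q)\}_{j\ge 0}$ form a basis, so that an identity between polynomials in $x$ is equivalent to equality of their coefficients in this basis, and I will freely use the three recurrences (\ref{He}), (\ref{AlSC}), (\ref{_B}) together with the $q$-Pascal rule $\binom{n+1}{k}_q=\binom{n}{k-1}_q+q^{k}\binom{n}{k}_q$ and the elementary consequences $\binom{n}{k+1}_q[k+1]_q=[n]_q\binom{n-1}{k}_q=[n-k]_q\binom{n}{k}_q$ of the factorial formula for $\binom{n}{k}_q$ (with the usual convention that out-of-range $q$-binomials vanish). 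One may note in passing that i) is equivalent, after dividing by $(q)_n$ and summing against $t^n$, to the generating-function factorization $\widehat P(t)=\widehat H(t)\,\widehat B(\rho t)$; but the recurrence-based induction is self-contained and is what I would write out.

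For i) the base cases $n=0,1$ are immediate, since $P_0=1$, $P_1=x-\rho y$ and $B_0=1$, $B_1(y|q)=-y$, $H_0=1$, $H_1=x$ all follow from the recurrences. For the inductive step I would assume the formula for $P_n$ and $P_{n-1}$, insert them into $P_{n+1}=(x-\rho y q^{n})P_n-(1-\rho^{2}q^{n-1})[n]_qP_{n-1}$, and expand the product $xP_n$ via $xH_j(x|q)=H_{j+1}(x|q)+[j]_qH_{j-1}(x|q)$. Collecting the coefficient of a fixed $H_k(x|q)$ produces four groups of terms that separate by the power of $\rho$. The terms carrying $\rho^{n-1-k}$ (coming from the $H_{j-1}$ part of $xP_n$ and from the constant $1$ in $1-\rho^{2}q^{n-1}$) cancel identically thanks to $\binom{n}{k+1}_q[k+1]_q=[n]_q\binom{n-1}{k}_q$. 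The remaining terms, all carrying $\rho^{n+1-k}$, reduce after applying $q$-Pascal to replace $\binom{n+1}{k}_q-\binom{n}{k-1}_q$ by $q^{k}\binom{n}{k}_q$, and then match the $B$-recurrence (\ref{_B}) for $B_{n+1-k}(y|q)$ once one uses $[n-k]_q\binom{n}{k}_q=[n]_q\binom{n-1}{k}_q$. This final matching is the most delicate bookkeeping and is the step I would carry out most carefully; it is where the proof of i) really lives.

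Given i), part ii) is a one-line specialization. The identity in i) was proved using only the defining recurrences, so it holds as an identity between polynomials in $x,y,\rho$; in particular I may set $y=x$ and $\rho=1$. The right-hand side then becomes exactly the sum in ii), while the left-hand side is $P_n(x|x,1,q)$. A short separate induction on (\ref{AlSC}) shows $P_n(x|x,1,q)=0$ for all $n\ge 1$: one has $P_1(x|x,1,q)=x-x=0$, and since the coefficient $1-\rho^{2}q^{n-1}$ of $P_{n-1}$ vanishes at $\rho=1,n=1$ one also gets $P_2(x|x,1,q)=0$; thereafter the recurrence simply propagates two consecutive zeros, giving $P_n(x|x,1,q)=0$ for every $n\ge 1$. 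This is precisely ii).

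Finally, iii) follows by inverting i) with the help of ii). I would substitute the expression for $P_j$ from i) into the right-hand side of iii), interchange the two summations, and use the $q$-trinomial revision identity $\binom{n}{j}_q\binom{j}{i}_q=\binom{n}{i}_q\binom{n-i}{j-i}_q$ to factor out $\binom{n}{i}_q$. After reindexing with $l=j-i$ and $m=n-i$, the inner sum becomes $\sum_{l=0}^{m}\binom{m}{l}_q B_l(y|q)H_{m-l}(y|q)$, which is exactly the sum of ii) and hence vanishes unless $m=0$, i.e. $i=n$. Only the $i=n$ term survives, contributing coefficient $1$ to $H_n(x|q)$, which proves iii). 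The sole ingredient beyond i) and ii) is the $q$-trinomial identity, again immediate from the factorial formula for $\binom{n}{k}_q$. Thus the entire Proposition rests on the single genuinely computational step, the inductive verification of i).
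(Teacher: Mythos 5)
Your argument is correct, but it is organized quite differently from the paper's. The paper does not prove i) and ii) at all: it simply cites \cite{bms} for both, and only iii) is argued in the text, by exactly the substitution--interchange--apply-ii) route you describe (so your proof of iii) coincides with the paper's). Where you genuinely diverge is in making i) and ii) self-contained. Your induction for i) checks out: the $\rho^{n-1-k}$ terms cancel via $\QATOPD[ ] {n}{k+1}_{q}\left[ k+1\right] _{q}=\left[ n\right] _{q}\QATOPD[ ] {n-1}{k}_{q}$, and the $\rho^{n+1-k}$ terms assemble into $\QATOPD[ ] {n+1}{k}_{q}B_{n+1-k}$ via the $q$-Pascal rule $\QATOPD[ ] {n+1}{k}_{q}=\QATOPD[ ] {n}{k-1}_{q}+q^{k}\QATOPD[ ] {n}{k}_{q}$ together with $\left[ n-k\right] _{q}\QATOPD[ ] {n}{k}_{q}=\left[ n\right] _{q}\QATOPD[ ] {n-1}{k}_{q}$ and the recurrence (\ref{_B}); I verified the bookkeeping and it closes. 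Your derivation of ii) as the specialization $y=x$, $\rho=1$ of i) is a nice observation not present in the paper: since (\ref{AlSC}) is polynomial in $\rho$, the identity i) extends to $\rho=1$, and $P_{n}(x|x,1,q)=0$ for $n\geq 1$ because $P_{1}$ vanishes and the coefficient $1-q^{0}$ kills the $P_{0}$ term in the step producing $P_{2}$, after which two consecutive zeros propagate. (One cosmetic point: matching your inner sum in iii) with the sum in ii) requires the reindexing $j\mapsto m-l$ and the symmetry $\QATOPD[ ] {m}{l}_{q}=\QATOPD[ ] {m}{m-l}_{q}$, which you should state.) What your approach buys is a proof independent of \cite{bms}; what the paper's citation buys is brevity and, implicitly, the probabilistic context in which i) and ii) were originally established.
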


\begin{proof}
i) and ii) are proved in \cite{bms}. iii) Follows after inserting $P_{j}$
given by i), changing the order of summation and applying ii). However iii)
was known earlier, was given by formula (4.7) in \cite{IRS99} for
polynomials $h_{n}$ and $Q_{n}(x|a,b,q).$
\end{proof}

We will also need the following additional properties of polynomials $%
\left\{ H_{n}\left( x|q\right) \right\} $ and $\left\{ B_{n}\left(
x|q\right) \right\} $.

\begin{lemma}
\label{BH}i) 
\begin{equation*}
B_{n}\left( x|q\right) \allowbreak =\allowbreak (-1)^{n}q^{\binom{n}{2}%
}\sum_{k=0}^{\left\lfloor n/2\right\rfloor }\QATOPD[ ] {n}{k}_{q}\QATOPD[ ] {%
n-k}{k}_{q}\left[ k\right] _{q}!q^{-k(n-k)}H_{n-2k}\left( x|q\right) ,
\end{equation*}

Let us denote $I_{n,m}(x|q)\allowbreak =\allowbreak \sum_{i=0}^{n}\QATOPD[ ]
{n}{i}_{q}B_{n-i}\left( x|q\right) H_{i+m}\left( x|q\right) ,$ then

ii) 
\begin{equation*}
I_{n,m}\left( x|q\right) \allowbreak =\allowbreak -\sum_{k=1}^{n}\QATOPD[ ] {%
m}{k}_{q}\QATOPD[ ] {n}{k}_{q}\left[ k\right] _{q}!I_{n-k,m-k}\left(
x|q\right) ,
\end{equation*}

iii) 
\begin{equation*}
I_{n,m}(x|q)\allowbreak =\allowbreak \left\{ 
\begin{array}{ccc}
0 & if & n>m \\ 
(-1)^{n}q^{\binom{n}{2}}\frac{\left[ m\right] _{q}!}{\left[ m-n\right] _{q}!}%
H_{m-n}\left( x|q\right) & if & n\leq m%
\end{array}%
\right. ,
\end{equation*}

iv) $\forall n,m\geq 1:$%
\begin{equation*}
H_{m}\left( x|q\right) B_{n}\left( x|q\right) =(-1)^{n}q^{\binom{n}{2}%
}\sum_{i=0}^{\left\lfloor (n+m)/2\right\rfloor }\QATOPD[ ] {n}{i}_{q}\QATOPD[
] {n+m-i}{i}_{q}\left[ i\right] _{q}!q^{-i(n-i)}H_{n+m-2i}\left( x|q\right) .
\end{equation*}
\end{lemma}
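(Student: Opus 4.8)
The plan is to treat the four assertions as one connected chain: prove (i) directly, deduce the recursion (ii) from the $q$-Hermite linearization identity (\ref{identity}) together with Proposition \ref{connection}(ii), derive the closed form (iii) from (ii) by induction, and finally obtain (iv) by combining (i) with (\ref{identity}). It is worth observing at the outset that (iv) reduces to (i) when $m=0$ (since $H_{0}\equiv 1$), so in principle (iv) could be proved first and (i) read off as a special case; I prefer to establish (i) independently because it is the computational engine for (iv). For (i) itself I would induct on $n$, comparing the recurrences (\ref{_B}) and (\ref{He}). After the base cases $n=0,1$, substitute the conjectured expansion into (\ref{_B}), rewrite the term $-q^{n}xB_{n}$ using $xH_{n-2k}=H_{n-2k+1}+[n-2k]_{q}H_{n-2k-1}$, shift the summation index, and collapse the two resulting coefficient families into the single one claimed for $B_{n+1}$; the only nonroutine inputs are $q$-Pascal relations among the coefficients $\QATOPD[ ]{n}{k}_{q}\QATOPD[ ]{n-k}{k}_{q}[k]_{q}!q^{-k(n-k)}$. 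Alternatively, via (\ref{h(q^-1)}) one recognizes (i) as the classical connection formula expanding $h_{n}(\cdot|q^{-1})$ in the basis $\{h_{n-2k}(\cdot|q)\}$.

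For (ii), which is the cleanest step, I would use (\ref{identity}) in the form $H_{i+m}=H_{i}H_{m}-\sum_{j\geq 1}\QATOPD[ ]{m}{j}_{q}\QATOPD[ ]{i}{j}_{q}[j]_{q}!H_{i+m-2j}$ and substitute it into the definition of $I_{n,m}$. The leading part becomes $H_{m}\sum_{i}\QATOPD[ ]{n}{i}_{q}B_{n-i}H_{i}=H_{m}\,I_{n,0}$, which vanishes for $n>0$ by Proposition \ref{connection}(ii). Interchanging the order of summation in the remaining double sum and re-indexing $i=i'+k$, the claim reduces to the $q$-trinomial revision identity $\QATOPD[ ]{n}{i'+k}_{q}\QATOPD[ ]{i'+k}{k}_{q}=\QATOPD[ ]{n}{k}_{q}\QATOPD[ ]{n-k}{i'}_{q}$, which converts the inner sum into $\QATOPD[ ]{n}{k}_{q}I_{n-k,m-k}$ and produces (ii) exactly.

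For (iii) I would induct on $n$ using (ii), with base case $I_{0,m}=B_{0}H_{m}=H_{m}$. If $n>m$, each surviving summand on the right of (ii) satisfies $n-k>m-k$, so it vanishes by the inductive hypothesis and $I_{n,m}=0$. If $n\leq m$, the inductive hypothesis makes every summand proportional to the same $H_{m-n}$, and after factoring out $[n]_{q}![m]_{q}!$ the matching of coefficients reduces to $\sum_{k=0}^{n}(-1)^{k}q^{\binom{n-k}{2}}\QATOPD[ ]{n}{k}_{q}=0$ for $n\geq 1$, which is the $q$-binomial theorem evaluated at argument $1$ (equivalently a specialization of Lemma \ref{nawiasy}).

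Finally, for (iv) I would insert the expansion (i) of $B_{n}$ and expand each product $H_{m}H_{n-2k}$ by (\ref{identity}); collecting the coefficient of $H_{n+m-2i}$ with $i=k+j$ reduces the assertion to the single terminating $q$-binomial convolution $\sum_{k}\QATOPD[ ]{n}{k}_{q}\QATOPD[ ]{n-k}{k}_{q}[k]_{q}!q^{-k(n-k)}\QATOPD[ ]{m}{i-k}_{q}\QATOPD[ ]{n-2k}{i-k}_{q}[i-k]_{q}!=\QATOPD[ ]{n}{i}_{q}\QATOPD[ ]{n+m-i}{i}_{q}[i]_{q}!q^{-i(n-i)}$. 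Verifying this summation is the main obstacle: it is a nontrivial terminating $q$-hypergeometric identity that I expect to settle by normalizing the terms into standard ${}_{2}\phi_{1}$ (or ${}_{3}\phi_{2}$) form and applying the $q$-Vandermonde / $q$-Saalsch\"{u}tz summation. The other three parts hinge only on standard $q$-Pascal and $q$-binomial-theorem manipulations, so the weight of the argument rests on this last convolution.
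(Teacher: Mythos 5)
Your plan coincides with the paper's own proof: you prove (ii) by substituting the linearization identity (\ref{identity}) into the definition of $I_{n,m}$, killing the leading term via $I_{n,0}=0$ (Proposition \ref{connection}(ii)) and re-indexing with $q$-trinomial revision; (iii) by induction on $n$ from (ii) with the coefficient sum collapsing because $(1;q)_n=0$; (i) as the classical connection formula for $h_n(\cdot|q^{-1})$ (the paper simply cites formula 13.3.6 of Ismail); and (iv) by combining (i) with (\ref{identity}) — exactly the paper's route in each case. The one point where you are more explicit than the paper is the terminating convolution identity needed to finish (iv); that identity is correct (it checks on small cases and is a $q$-Vandermonde--type summation), whereas the paper merely asserts that (iv) ``follows i) and (\ref{identity})'' without displaying it.
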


\begin{proof}
i) Follows basically the formula 13.3.6 in \cite{IA} after necessary
re-normalization and re-scaling. iv) Follows i) and (\ref{identity}).
Lengthy, detailed proofs of ii) and iii) are shifted to section \ref{dowody}.
\end{proof}

Since the case $q\allowbreak =\allowbreak 0$ is important to the newly
emerging so called "free probability" (see e.g. nomography \cite{Voi}) let
us see how the considered above sets of polynomials look for $q\allowbreak
=\allowbreak 0$. To do this let us introduce the so called Chebyshev
polynomials of the second kind $U_{n}\left( x\right) $ defined e.g. by the
following three term recurrence :%
\begin{equation}
2xU_{n}\left( x\right) \allowbreak =\allowbreak U_{n+1}\left( x\right)
+U_{n-1}\left( x\right) ,  \label{czeb}
\end{equation}%
for $n\geq 0$ with $U_{-1}\left( x\right) \allowbreak =\allowbreak 0,$ $%
U_{0}\left( x\right) \allowbreak =\allowbreak 1$.

\begin{remark}
\label{q=0}Let us set $q\allowbreak =\allowbreak 0,$ then $S\left( 0\right)
\allowbreak =\allowbreak \lbrack -2,2];\forall n\geq 0,$ we have:

i) $H_{n}\left( x|0\right) \allowbreak =\allowbreak U_{n}\left( x/2\right) ,$

ii) $Q_{n}\left( x|a,b,0\right) \allowbreak =\allowbreak
U_{n}(x)-(a+b)U_{n-1}\left( x\right) \allowbreak +\allowbreak
abU_{n-2}\left( x\right) ,$ $\allowbreak $

iii) $P_{n}\left( x|y,\rho ,0\right) \allowbreak =\allowbreak U_{n}\left(
x/2\right) -\rho yU_{n-1}\left( x/2\right) +\rho ^{2}U_{n-2}\left(
x/2\right) ,$

iv) $B_{-1}\left( y|0\right) \allowbreak =\allowbreak b_{-1}\left(
y|0\right) \allowbreak =\allowbreak 0,$ $B_{0}\left( y|0\right) \allowbreak
=\allowbreak b_{0}\left( y|0\right) \allowbreak =\allowbreak 1,$ $%
B_{n}\left( y|0\right) \allowbreak =\allowbreak \left\{ 
\begin{array}{ccc}
-y & if & n=1 \\ 
1 & if & n=2 \\ 
0 & if & n\geq 3%
\end{array}%
\right. $ and $b_{n}\left( y|0\right) \allowbreak =\allowbreak \left\{ 
\begin{array}{ccc}
-2y & if & n=1 \\ 
1 & if & n=2 \\ 
0 & if & n\geq 3%
\end{array}%
\right. $,

v) $f_{N}\left( x|0\right) \allowbreak =\allowbreak \frac{1}{2\pi }\sqrt{%
4-x^{2}}I_{S\left( 0\right) }$ and 
\begin{equation*}
f_{CN}\left( x|y,\rho ,0\right) \allowbreak =\allowbreak \frac{\left( 1-\rho
^{2}\right) \sqrt{4-x^{2}}}{2\pi w_{0}\left( x,y|\rho ,0\right) }I_{S\left(
0\right) },
\end{equation*}%
for $\left\vert \rho \right\vert <1,$ $y\in S\left( 0\right) $, \newline
vi) 
\begin{equation*}
\phi \left( x|y,\rho _{1},z,\rho _{2},0\right) \allowbreak =\allowbreak 
\frac{\left( 1-\rho _{1}^{2}\right) \left( 1-\rho _{2}^{2}\right)
w_{0}\left( y,z|\rho _{1}\rho _{2},0\right) \sqrt{4-x^{2}}}{\left( 1-\rho
_{1}^{2}\rho _{2}^{2}\right) w_{0}\left( x,y|\rho _{1},0\right) w_{0}\left(
x,z|\rho _{2},0\right) }\frac{1}{2\pi }I_{S\left( 0\right) },
\end{equation*}%
where $w_{0}\left( x,y|\rho _{1},0\right) $ is given by (\ref{w_k}).
\end{remark}

\begin{proof}
To get i) compare (\ref{czeb}) with $x$ replaced by $x/2$ and (\ref{He}) for 
$q=0$. To get ii) again compare (\ref{czeb}) and (\ref{AlSC1}) for $%
q\allowbreak =\allowbreak 0$ and notice that these recursions are the same,
however with different initial values. To get iv) we notice that for $%
q\allowbreak =\allowbreak 0$ and $n\geq 3$ we get $0$. For $n<3$ we get
these values directly from (\ref{_B}). iii) follows iv) and assertion i) of
Proposition \ref{connection} or of course from ii) using (\ref{podstawienie}%
) . To get v) and vi) we insert $q\allowbreak =\allowbreak 0$ in (\ref{qN}),
(\ref{fCN}) and (\ref{_x|yz}).
\end{proof}

\section{Main results\label{glowne}}

We will start this section with the presentation of an alternative form of
the AW polynomials. Let $\left\{ D_{n}\left( x|a,b,c,d,q\right) \right\}
_{n\geq -1}$ be the sequence the AW polynomials such that $D_{n}$ has
coefficient by $x^{n}$ equal to $2^{n}$. Thus the polynomials $\left\{
D_{n}\right\} $ are orthogonal with respect to the density $\psi
(x|a,b,c,d,q)$ mentioned in the Remark \ref{U1}. Let the polynomials $A_{n}$
be defined by the change of variables and parameters by the relationship:%
\begin{equation*}
A_{n}\left( x|y,\rho _{1},z,\rho _{2},q\right) \allowbreak =\allowbreak
D_{n}\left( x\sqrt{1-q}/2|a,b,c,d,q\right) ,
\end{equation*}%
with $a,b,c,d$ related to $y,\rho _{1},z,\rho _{2}$ by (\ref{par1}-\ref{par4}%
). We have:

\begin{theorem}
\label{reprezentacja}i) $\forall n\geq 1:$ 
\begin{equation*}
D_{n}\left( x|a,b,c,d,q\right) \allowbreak =\allowbreak \frac{\left(
ab,cd\right) _{n}}{\left( abcdq^{n-1}\right) _{n}}\sum_{j=0}^{n}\QATOPD[ ] {n%
}{j}_{q}b_{n-j}\left( x|q\right) \sum_{i=0}^{j}\QATOPD[ ] {j}{i}_{q}\frac{%
Q_{i}\left( x|a,b,q\right) Q_{j-i}\left( x|c,d,q\right) }{\left( ab\right)
_{i}\left( cd\right) _{j-i}},
\end{equation*}%
where the polynomials $\left\{ Q_{n}\left( x|a,b,q\right) \right\} $ and $%
\left\{ b_{n}\left( x|q\right) \right\} $ are defined by respectively (\ref%
{AlSC1}) and (\ref{maleB}).

ii) $\forall n\geq 1:$ 
\begin{equation*}
A_{n}\left( x|y,\rho _{1},z,\rho _{2},q\right) =\frac{\left( \rho
_{1}^{2},\rho _{2}^{2}\right) _{n}}{\left( \rho _{1}^{2}\rho
_{2}^{2}q^{n-1}\right) _{n}}\sum_{j=0}^{n}\QATOPD[ ] {n}{j}_{q}B_{n-j}\left(
x|q\right) \sum_{i=0}^{j}\QATOPD[ ] {j}{i}_{q}\frac{P_{i}\left( x|y,\rho
_{1},q\right) P_{j-i}(x|z,\rho _{2},q)}{\left( \rho _{1}^{2}\right)
_{i}\left( \rho _{2}^{2}\right) _{j-i}},
\end{equation*}%
where the polynomials $\left\{ P_{n}(x|y,\rho ,q)\right\} $ and $\left\{
B_{n}\left( x|q\right) \right\} $ are defined by respectively (\ref{AlSC})
and (\ref{_B}).

iii) $\forall n\geq 1:$%
\begin{equation*}
A_{n}\left( x|y,\rho _{1},z,\rho _{2},q\right) \allowbreak \newline
=\allowbreak \frac{\left( \rho _{1}^{2},\rho _{2}^{2}\right) _{n}}{\left(
\rho _{1}^{2}\rho _{2}^{2}q^{n-1}\right) _{n}}\sum_{m=0}^{n}\left( -1\right)
^{m}q^{\binom{m}{2}}\QATOPD[ ] {n}{m}\rho _{1}^{m}\frac{P_{n-m}\left(
x|z,\rho _{2},q\right) P_{m}\left( y|x,\rho _{1},q\right) }{\left( \rho
_{2}^{2}\right) _{n-m}(\rho _{1}^{2})_{m}}.
\end{equation*}
\end{theorem}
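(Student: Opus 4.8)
The three parts are really one identity seen three ways: (i) is the statement for the Askey--Wilson polynomial itself, (ii) is its image under the rescaling $X\mapsto x\sqrt{1-q}/2$ passing to the probabilistic parameters $(y,\rho _{1},z,\rho _{2})$, and (iii) is a more economical single--sum form of (ii). The plan is therefore to establish (i) from the $q$--hypergeometric definition, to read off (ii) by substitution, and finally to transform the double sum of (ii) into the single sum of (iii) by an argument--exchange identity for the Al--Salam--Chihara polynomials.

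For (i) I would start from the $_{4}\phi _{3}$ series defining $D_{n}$ (renormalised so that the coefficient of $x^{n}$ is $2^{n}$) and insert the explicit $q$--hypergeometric expansions of the two Al--Salam--Chihara factors $Q_{i}(x|a,b,q)$, $Q_{j-i}(x|c,d,q)$ together with that of $b_{n-j}$; recall that by (\ref{h(q^-1)}) the latter are, up to $(-1)^{n}q^{-\binom{n}{2}}$, the $q^{-1}$--Hermite polynomials, and it is precisely this $q^{-1}$--Hermite nature that lets the two separate blocks $(ae^{\pm i\theta })_{\bullet }$ and $(ce^{\pm i\theta })_{\bullet }$ coming from $Q_{i}$ and $Q_{j-i}$ be reassembled into the single block $(ae^{\pm i\theta })_{k}$ of the $_{4}\phi _{3}$. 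The two $q$--binomial coefficients combine into the $q$--trinomial $\frac{(q)_{n}}{(q)_{i}\,(q)_{j-i}\,(q)_{n-j}}$, and after interchanging the order of summation the inner sums reduce to $q$--Pochhammer summations of the type in Lemma \ref{nawiasy} (the second part, carrying the $(-1)^{i}q^{\binom{i}{2}}$ weight, is the relevant one) together with the $q$--binomial theorem. A softer cross--check, which I would keep in reserve, is to verify that the right--hand side is a polynomial of degree $n$ orthogonal to $H_{0},\dots ,H_{n-1}$ against the normalised weight $\phi $ of Remark \ref{U1}(ii) by means of the integration formulae of Proposition \ref{znane}.

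Part (ii) then follows by putting $X=x\sqrt{1-q}/2$ in (i). By (\ref{podstawienie}) one has $Q_{i}(x\sqrt{1-q}/2|a,b,q)=(1-q)^{i/2}P_{i}(x|y,\rho _{1},q)$ and likewise for the $(c,d)$ pair; by the definition of $b_{n}$ one has $b_{n-j}(x\sqrt{1-q}/2|q)=(1-q)^{(n-j)/2}B_{n-j}(x|q)$; and a direct computation gives $ab=\rho _{1}^{2}$, $cd=\rho _{2}^{2}$, $abcd=\rho _{1}^{2}\rho _{2}^{2}$, so the prefactor transports verbatim. The one point needing care is the bookkeeping of the powers of $(1-q)$: in every term the three exponents $\tfrac{n-j}{2},\tfrac{i}{2},\tfrac{j-i}{2}$ sum to $n/2$, producing a common factor $(1-q)^{n/2}$ which, together with $A_{n}(x)=D_{n}(x\sqrt{1-q}/2)$, yields (ii).

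The heart of the matter is (ii)$\Rightarrow $(iii). I would first merge the two $q$--binomials into the $q$--trinomial coefficient, then regroup the double sum according to $l:=j-i$ and $p:=n-l$, so that the $(z,\rho _{2})$--block $P_{l}(x|z,\rho _{2},q)/(\rho _{2}^{2})_{l}$ is pulled outside and the surviving inner sum is $\sum_{i=0}^{p}\QATOPD[ ] {p}{i}_{q}B_{p-i}(x|q)P_{i}(x|y,\rho _{1},q)/(\rho _{1}^{2})_{i}$. The crucial step is then the reciprocity identity
\[
\sum_{i=0}^{p}\QATOPD[ ] {p}{i}_{q}\frac{B_{p-i}(x|q)\,P_{i}(x|y,\rho _{1},q)}{(\rho _{1}^{2})_{i}}=(-1)^{p}q^{\binom{p}{2}}\frac{\rho _{1}^{p}}{(\rho _{1}^{2})_{p}}\,P_{p}(y|x,\rho _{1},q),
\]
which exchanges the two arguments of the Al--Salam--Chihara polynomial; substituting it and relabelling $m=p=n-l$ produces exactly the single sum of (iii), the prefactor being unaffected. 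I expect this reciprocity identity to be the main obstacle. I would prove it by expanding $P_{i}(x|y,\rho _{1},q)$ on the left and $P_{p}(y|x,\rho _{1},q)$ on the right through Proposition \ref{connection}(i), in the form $P_{n}(u|v,\rho )=\sum_{j}\QATOPD[ ] {n}{j}_{q}\rho ^{n-j}B_{n-j}(v|q)H_{j}(u|q)$ and its $u\leftrightarrow v$ transpose, linearising the resulting products $B_{p-i}(x|q)H_{s}(x|q)$ by Lemma \ref{BH}(iv), and finally disposing of the $(\rho _{1}^{2})_{\bullet }$ denominators by Lemma \ref{nawiasy}(ii), whose alternating $(-1)^{i}q^{\binom{i}{2}}$ structure matches the claimed right--hand side; here the collapsing identity Lemma \ref{BH}(iii), that $I_{n,m}$ vanishes for $n>m$, is what forces all but the extreme term to cancel. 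Should the direct expansion become unwieldy, I would instead verify the identity by induction on $p$ using the recurrences (\ref{AlSC}) and (\ref{_B}), the cases $p=0,1$ being immediate.
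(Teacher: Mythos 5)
Your proposal takes a genuinely different route from the paper's in parts (i) and (iii). For (i) the paper never touches the $_{4}\phi _{3}$ series: it multiplies the two Al-Salam--Chihara generating functions (formula 3.8.14 of Koekoek--Swarttouw), observes that their product differs from the Askey--Wilson generating function (15.2.6 of Ismail) by exactly the factor $\left( te^{i\theta },te^{-i\theta }\right) _{\infty }$, identifies that factor as $\sum_{n}b_{n}\left( x|q\right) t^{n}/\left( q\right) _{n}$, and applies the Cauchy product rule twice. For (iii) the paper argues analytically: it expands $A_{n}$, viewed as a function of $y$, in the orthogonal basis $\left\{ P_{m}\left( y|x,\rho _{1},q\right) \right\} $ with respect to $f_{CN}\left( y|x,\rho _{1},q\right) dy$, computing the Fourier coefficients from form (ii) by means of Corollary \ref{Al-Salam}, Proposition \ref{znane}(iii) and the collapsing identity of Lemma \ref{BH}(iii). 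Your algebraic reduction of (iii) is attractive and integration-free: the regrouping by $l=j-i$ is correct, and your reciprocity identity
\begin{equation*}
\sum_{i=0}^{p}\genfrac{[}{]}{0pt}{}{p}{i}_{q}\frac{B_{p-i}\left( x|q\right)
P_{i}\left( x|y,\rho _{1},q\right) }{\left( \rho _{1}^{2}\right) _{i}}
=\left( -1\right) ^{p}q^{\binom{p}{2}}\frac{\rho _{1}^{p}}{\left( \rho
_{1}^{2}\right) _{p}}P_{p}\left( y|x,\rho _{1},q\right)
\end{equation*}
is in fact true (it checks out directly for $p=1,2$) and, once substituted, reproduces (iii) exactly, prefactor included.

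That said, two load-bearing steps are asserted rather than proved. First, in (i) the plan to expand the $_{4}\phi _{3}$ and ``reassemble the blocks'' does not identify the mechanism that makes the identity work, namely that $\left( te^{i\theta },te^{-i\theta }\right) _{\infty }=\prod_{k\geq 0}\left( 1-2xtq^{k}+t^{2}q^{2k}\right) $ is simultaneously the generating function of the $b_{n}$ and the exact discrepancy between the product of the two ASC generating functions and the AW one; the appeal to Lemma \ref{nawiasy}(ii) is not substantiated, and a direct coefficient-level attack on the $_{4}\phi _{3}$ is far from routine. Second, the reciprocity identity is not among the paper's stated results --- it is not Corollary \ref{Al-Salam}, which expands $P_{p}\left( y|x,\rho ,q\right) $ in terms of $H_{p-s}\left( y|q\right) P_{s}\left( x|y,\rho ,q\right) $ rather than $B_{p-i}\left( x|q\right) P_{i}\left( x|y,\rho ,q\right) $ --- so it requires its own proof. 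Your sketch via Proposition \ref{connection}(i) and Lemma \ref{BH}(iv) leaves the decisive cancellations unverified, and the inductive fallback is not immediate either, since a $q$-binomial convolution of two sequences each satisfying a three-term recurrence does not automatically satisfy one. Until one of these is carried out, part (i) rests on an unexecuted computation and part (iii) on an unproven lemma.
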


\begin{proof}
i) We will use two facts concerning forms of the generating functions of the
polynomials $D_{n}$ and $Q_{n}$. Namely in \cite{IA} (formula 15.2.6) and 
\cite{Koek} (formula 3.1.13) we have the following formula adopted for the
polynomials $D_{n}$ 
\begin{equation*}
\sum_{n\geq 0}\frac{\left( abcdq^{n-1}\right) _{n}D_{n}\left(
x|a,b,c,d,q\right) }{\left( ab,cd,q\right) _{n}}t^{n}=_{~2}\phi _{1}\left(
\QATOPD. \vert {ae^{i\theta },be^{i\theta }}{ab}q,te^{-i\theta }\right)
~_{~2}\phi _{1}\left( \QATOPD. \vert {c^{-i\theta },de^{-i\theta
}}{cd}q,te^{i\theta }\right) ,
\end{equation*}%
where $x\allowbreak =\allowbreak \cos \theta $. On the other hand in \cite%
{Koek} we have the following formula (3.8.14) 
\begin{equation*}
\sum_{n\geq 0}\frac{Q_{n}\left( x|a,b,q\right) }{\left( ab,q\right) _{n}}%
t^{n}\allowbreak =\allowbreak \frac{1}{\left( te^{i\theta }\right) _{\infty }%
}~_{~2}\phi _{1}\left( \QATOPD. \vert {ae^{i\theta },be^{i\theta
}}{ab}q,te^{-i\theta }\right) ,
\end{equation*}%
again with $x\allowbreak =\allowbreak \cos \theta $. Noting that $\cos
(-\theta )\allowbreak =\allowbreak \cos \left( \theta \right) $ we see that%
\begin{eqnarray*}
&&\left( te^{-i\theta },te^{i\theta }\right) _{\infty }\sum_{i\geq 0}\frac{%
Q_{n}\left( x|a,b,q\right) }{\left( ab,q\right) _{n}}t^{n}\sum_{i\geq 0}%
\frac{Q_{n}\left( x|c,d,q\right) }{\left( cd,q\right) _{n}}t^{n}\allowbreak
\\
&=&\allowbreak _{~2}\phi _{1}\left( \QATOPD. \vert {ae^{i\theta
},be^{i\theta }}{ab}q,te^{-i\theta }\right) ~_{~2}\phi _{1}\left( \QATOPD.
\vert {c^{-i\theta },de^{-i\theta }}{cd}q,te^{i\theta }\right) .
\end{eqnarray*}%
Now it remains to notice that $\left( te^{-i\theta },te^{i\theta }\right)
_{\infty }\allowbreak =\allowbreak \prod_{k=0}^{\infty }\left(
1-2xtq^{k}+t^{2}q^{2k}\right) ,$ confront it with the formulae (\ref{_B})
and (\ref{maleB}) and given in \cite{bms} generating function of the
polynomials $B_{n}\left( x|q\right) $ and thus deduce that 
\begin{equation*}
\left( te^{-i\theta },te^{i\theta }\right) _{\infty }\allowbreak
=\allowbreak \sum_{n\geq 0}\frac{b_{n}\left( x|q\right) t^{n}}{\left(
q\right) _{n}}.
\end{equation*}%
Next we apply twice the Cauchy formula for the multiplication of power
series.

ii) Let us change parameters to ones given by (\ref{par1}-\ref{par4}) and
let us also redefine the variable $x$ by introducing instead the variable $%
\xi \allowbreak =\allowbreak 2x/\sqrt{1-q}$ and defining the polynomials $%
A_{n}\left( \xi |y,\rho _{1},z,\rho _{2},q\right) =2^{-n}p_{n}\left(
x,a,b,c,d|q\right) /\left( abcdq^{n-1}\right) _{n}$ where $a,b,c,d$ are
given by (\ref{par1}-\ref{par4}).

iii) The proof of this formula is longer and thus is shifted to section (\ref%
{dowody}).
\end{proof}

As a corollary we get the following property of the ASC polynomials.

\begin{corollary}
\label{symmetry}$\forall n\geq 1:$%
\begin{eqnarray*}
&&\sum_{m=0}^{n}\left( -1\right) ^{m}q^{\binom{m}{2}}\QATOPD[ ] {n}{m}%
_{q}\rho _{1}^{m}\frac{P_{n-m}\left( x|z,\rho _{2},q\right) P_{m}\left(
y|x,\rho _{1},q\right) }{\left( \rho _{2}^{2}\right) _{n-m}(\rho
_{1}^{2})_{m}} \\
&&\sum_{m=0}^{n}\left( -1\right) ^{m}q^{\binom{m}{2}}\QATOPD[ ] {n}{m}%
_{q}\rho _{2}^{m}\frac{P_{n-m}\left( x|y,\rho _{1},q\right) P_{m}\left(
z|x,\rho _{2},q\right) }{\left( \rho _{1}^{2}\right) _{n-m}(\rho
_{2}^{2})_{m}}.
\end{eqnarray*}
\end{corollary}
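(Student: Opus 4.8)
The plan is to obtain the corollary as a direct consequence of the permutation symmetry of the Askey--Wilson polynomials in their four parameters, combined with the representation in Theorem \ref{reprezentacja} iii). Note first that, as displayed, the statement is missing its central equality sign; the assertion is that the first sum equals the second. I read the first sum as exactly the series furnished by Theorem \ref{reprezentacja} iii) for $A_{n}\left( x|y,\rho _{1},z,\rho _{2},q\right) $ (after stripping the prefactor), and the second sum as the same series with the substitution $\left( y,\rho _{1}\right) \leftrightarrow \left( z,\rho _{2}\right) $, that is, the series attached to $A_{n}\left( x|z,\rho _{2},y,\rho _{1},q\right) $.

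The one genuine ingredient I would invoke is that $D_{n}\left( \cdot |a,b,c,d,q\right) $ is invariant under every permutation of $a,b,c,d$; in particular
\[
D_{n}\left( x|a,b,c,d,q\right) =D_{n}\left( x|c,d,a,b,q\right) .
\]
To justify this within the present setup I would argue by uniqueness of orthogonal polynomials. By Remark \ref{U1} ii) the $D_{n}$ are orthogonal with respect to the normalized Askey--Wilson weight $\psi \left( \cdot |a,b,c,d,q\right) $, whose dependence on the parameters enters only symmetrically, through the four factors $\left( ae^{i\theta },ae^{-i\theta }\right) _{\infty },\ldots ,\left( de^{i\theta },de^{-i\theta }\right) _{\infty }$ and the overall normalizing constant. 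Since the normalization of $D_{n}$ (coefficient $2^{n}$ at $x^{n}$) does not depend on $a,b,c,d$, the family $\left\{ D_{n}\right\} $ is uniquely determined by this weight and hence inherits its full symmetry in the four parameters.

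Recalling from (\ref{par1}--\ref{par4}) that the pair $\left( a,b\right) $ is built from $\left( y,\rho _{1}\right) $ and $\left( c,d\right) $ from $\left( z,\rho _{2}\right) $, the swap $\left( a,b\right) \leftrightarrow \left( c,d\right) $ coincides at the level of $A_{n}$ with the swap $\left( y,\rho _{1}\right) \leftrightarrow \left( z,\rho _{2}\right) $, so that
\[
A_{n}\left( x|y,\rho _{1},z,\rho _{2},q\right) =A_{n}\left( x|z,\rho _{2},y,\rho _{1},q\right) .
\]
I would then expand both sides using Theorem \ref{reprezentacja} iii). The two prefactors are equal, since $\left( \rho _{1}^{2},\rho _{2}^{2}\right) _{n}=\left( \rho _{2}^{2},\rho _{1}^{2}\right) _{n}$ and $\left( \rho _{1}^{2}\rho _{2}^{2}q^{n-1}\right) _{n}=\left( \rho _{2}^{2}\rho _{1}^{2}q^{n-1}\right) _{n}$; moreover, for $\left\vert \rho _{1}\right\vert ,\left\vert \rho _{2}\right\vert <1$ and $\left\vert q\right\vert \leq 1$ this common factor is finite and nonzero, so it may be cancelled, leaving precisely the claimed identity between the two sums.

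The step demanding the most care is the symmetry of $D_{n}$ in all four parameters; everything else is bookkeeping with $q$-Pochhammer symbols. If one prefers not to route this through the weight-symmetry and uniqueness argument, the symmetry is a classical property of the Askey--Wilson polynomials (see e.g. \cite{AW85}, \cite{IA}, \cite{Koek}) and may simply be quoted.
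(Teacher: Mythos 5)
Your proposal is correct, and it reaches the same intermediate fact as the paper --- the invariance $A_{n}\left( x|y,\rho _{1},z,\rho _{2},q\right) =A_{n}\left( x|z,\rho _{2},y,\rho _{1},q\right) $, followed by two applications of Theorem \ref{reprezentacja} iii) and cancellation of the (symmetric, nonvanishing) prefactor --- but you justify that invariance by a different mechanism. The paper's proof is a one-liner: ``follows symmetry exposed in assertion ii) of the Theorem,'' meaning that the double-sum formula of Theorem \ref{reprezentacja} ii) is manifestly unchanged under $\left( y,\rho _{1}\right) \leftrightarrow \left( z,\rho _{2}\right) $ (reindex the inner sum by $i\mapsto j-i$), so the symmetry of $A_{n}$ is read off from a formula already proved in the paper, keeping the argument purely algebraic and self-contained. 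You instead route through the classical invariance of $D_{n}\left( \cdot |a,b,c,d,q\right) $ under permutations of its four parameters, derived from the manifest symmetry of the Askey--Wilson weight together with uniqueness of orthogonal polynomials with a parameter-independent leading-coefficient normalization; this is a valid and conceptually illuminating argument (it explains \emph{why} the symmetry must hold), but it imports either a classical external fact or the uniqueness machinery, whereas the paper needs only its own assertion ii). You were also right to flag the missing equality sign in the displayed statement; it is a typo, and the intended claim is indeed that the two sums are equal.
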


\begin{proof}
Follows symmetry exposed in assertion ii) of the Theorem.
\end{proof}

\begin{corollary}
\label{free1}For $q\allowbreak =\allowbreak 0$ we get $D_{1}\left(
x|a,b,c,d,0\right) \allowbreak =\allowbreak 2x\allowbreak -\allowbreak \frac{%
a+b+c+d-abc-bcd-acd-abd}{1-abcd},$ $D_{2}\left( x|a,b,c,d,0\right)
\allowbreak =\allowbreak 4x^{2}-2(a+b+c+d)x\allowbreak +\allowbreak
ab\allowbreak +\allowbreak ac+ad\allowbreak +\allowbreak bc\allowbreak
+\allowbreak bd\allowbreak +\allowbreak cd\allowbreak -\allowbreak
1\allowbreak -\allowbreak abcd$ and generally for $n\geq 2$ 
\begin{gather*}
D_{n}\left( x|a,b,c,d,0\right) \allowbreak =\allowbreak \sum_{i=0}^{n}\frac{%
Q_{i}\left( x|a,b,0\right) Q_{n-i}\left( x|c,d,0\right) }{\left( ab;0\right)
_{i}\left( cd;0\right) _{n-i}} \\
-2x\sum_{i=0}^{n-1}\frac{Q_{i}\left( x|a,b,0\right) Q_{n-1-i}\left(
x|c,d,0\right) }{\left( ab;0\right) _{i}\left( cd;0\right) _{n-1-i}} \\
+\sum_{i=0}^{n-2}\frac{Q_{i}\left( x|a,b,0\right) Q_{n-2-i}\left(
x|c,d,0\right) }{\left( ab;0\right) _{i}\left( cd;0\right) _{n-2-i}},
\end{gather*}%
where $Q_{i}\left( x|a,b,0\right) $ and $\left( a;0\right) _{i}$ are defined
by assertion ii) of Remark \ref{q=0} and formulae from the beginning of the
section \ref{pomoc}. Similarly $A_{1}\left( x|y,\rho _{1},z,\rho
_{2},0\right) \allowbreak =\allowbreak x-\frac{y\rho _{1}\left( 1-\rho
_{2}^{2}\right) +z\rho _{2}\left( 1-\rho _{1}^{2}\right) }{1-\rho
_{1}^{2}\rho _{2}^{2}}$ and for $n\geq 2$ 
\begin{gather*}
\frac{A_{n}\left( x|y,\rho _{1},z,\rho _{2},0\right) }{\left( 1-\rho
_{1}^{2}\right) \left( 1-\rho _{2}^{2}\right) }\allowbreak =\allowbreak
\sum_{m=0}^{n}\rho _{1}^{m}\frac{P_{n-m}\left( x|z,\rho _{2},0\right)
P_{m}\left( y|x,\rho _{1},0\right) }{\left( \rho _{2}^{2};0\right)
_{n-m}(\rho _{1}^{2};0)_{m}} \\
-\sum_{m=0}^{n-1}\rho _{1}^{m}\frac{P_{n-1-m}\left( x|z,\rho _{2},0\right)
P_{m}\left( y|x,\rho _{1},0\right) }{\left( \rho _{2}^{2};0\right)
_{n-1-m}(\rho _{1}^{2};0)_{m}},
\end{gather*}%
where $P_{m}\left( x|y,\rho ,0\right) $ are given by assertion iii) of
Remark \ref{q=0}.
\end{corollary}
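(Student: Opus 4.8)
The plan is to derive every line of the corollary by specializing Theorem \ref{reprezentacja} to $q=0$ and substituting the explicit $q=0$ data assembled in Remark \ref{q=0}, using throughout the $q=0$ conventions $(a;0)_n=1-a$ for $n\geq1$, $(a;0)_0=1$, and the fact that both Gaussian binomial coefficients reduce to $1$. No genuinely new idea is required; the whole content sits in two bookkeeping steps, namely evaluating the $q=0$ limits of the rational prefactors and recording which terms of the multiple sums survive.

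For the $D_n$ family I would start from assertion (i) of Theorem \ref{reprezentacja}. By Remark \ref{q=0}(iv) the coefficient $b_{n-j}(x|0)$ vanishes once $n-j\geq3$ and equals $1,-2x,1$ for $n-j=0,1,2$, so the outer sum collapses to its three terms $j=n,n-1,n-2$. This immediately produces the combination of three inner sums displayed in the statement, with summand $Q_i(x|a,b,0)Q_{n-i}(x|c,d,0)/((ab;0)_i(cd;0)_{n-i})$ built from the Chebyshev expressions of Remark \ref{q=0}(ii). It then remains to evaluate the prefactor $(ab,cd)_n/(abcdq^{n-1})_n$ at $q=0$; here the dichotomy $0^{0}=1$ against $0^{n-1}=0$ for $n\geq2$ is decisive, giving $(1-ab)(1-cd)/(1-abcd)$ when $n=1$ and $(1-ab)(1-cd)$ when $n\geq2$, the latter factor multiplying the three inner sums in exact parallel with the normalization used in the $A_n$ identity. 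The closed forms for $D_1$ and $D_2$ follow by inserting $Q_0=1$, $Q_1(x|a,b,0)=2x-(a+b)$ and $Q_2(x|a,b,0)=4x^2-1-2(a+b)x+ab$ and collecting powers of $x$.

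For the $A_n$ family I would proceed identically from assertion (iii) of Theorem \ref{reprezentacja}. The prefactor $(\rho_1^2,\rho_2^2)_n/(\rho_1^2\rho_2^2q^{n-1})_n$ behaves exactly as above, returning $(1-\rho_1^2)(1-\rho_2^2)/(1-\rho_1^2\rho_2^2)$ for $n=1$ and $(1-\rho_1^2)(1-\rho_2^2)$ for $n\geq2$, which is precisely why the statement is normalized by division by $(1-\rho_1^2)(1-\rho_2^2)$. Substituting $P_0=1$, $P_1(x|z,\rho_2,0)=x-\rho_2z$ and $P_1(y|x,\rho_1,0)=y-\rho_1x$ from Remark \ref{q=0}(iii) and simplifying yields the explicit form of $A_1$, and I would cross-check the general $A_n$ against the $D_n$ branch through the defining relation $A_n(x|y,\rho_1,z,\rho_2,q)=D_n(x\sqrt{1-q}/2|a,b,c,d,q)$ with $a,b,c,d$ as in (\ref{par1})--(\ref{par4}).

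The step I expect to be the main obstacle is the treatment of the Gaussian weight $q^{\binom{m}{2}}$ sitting inside assertion (iii). At $q=0$ it becomes $0^{\binom{m}{2}}$, which is $1$ for $m=0,1$ but $0$ for every $m\geq2$, so a literal substitution truncates the $m$-sum to two terms. Confirming that this truncation is compatible with the full-range sums written in the statement, rather than an over- or under-counting, is the one place where real care is needed; I would pin it down by matching against the explicit $A_1$ (the case $n=1$, where $m=0,1$ already exhaust the sum) and against the $D_n$ output via the change of variables above. Everything else reduces to the routine prefactor and polynomial evaluations already indicated.
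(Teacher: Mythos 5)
Your route --- specialize Theorem \ref{reprezentacja} at $q=0$ and feed in the explicit data of Remark \ref{q=0} --- is the only natural one, and the paper itself offers no proof of this corollary at all, so there is nothing else to compare against. Your treatment of the $D_n$ branch is correct: $b_{n-j}(x|0)\in\{1,-2x,1,0,0,\dots\}$ collapses the outer sum to $j=n,n-1,n-2$, and the prefactor $(ab,cd)_n/(abcdq^{n-1})_n$ becomes $(1-ab)(1-cd)/(1-abcd)$ for $n=1$ and $(1-ab)(1-cd)$ for $n\geq2$. Note that your own computation then shows the printed display for general $n\geq 2$ is missing this factor $(1-ab)(1-cd)$ (equivalently, its left-hand side should be $D_n/((1-ab)(1-cd))$, in parallel with the $A_n$ branch): the three sums alone give $\frac{(2x-a-b)(2x-c-d)}{(1-ab)(1-cd)}-1$ at $n=2$, which only reproduces the stated $D_2$ after multiplication by $(1-ab)(1-cd)$. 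Flag that correction explicitly rather than leaving it implicit.

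The obstacle you single out in the $A_n$ branch is, however, fatal to the statement as printed, and the deferred ``compatibility check'' would fail. Since $0^{\binom{m}{2}}=0$ for $m\geq2$, the $q=0$ limit of assertion (iii) of Theorem \ref{reprezentacja} is the \emph{two-term} expression $(1-\rho_1^2)(1-\rho_2^2)\bigl[\frac{P_n(x|z,\rho_2,0)}{(\rho_2^2;0)_n}-\rho_1\frac{P_{n-1}(x|z,\rho_2,0)P_1(y|x,\rho_1,0)}{(\rho_2^2;0)_{n-1}(1-\rho_1^2)}\bigr]$ (times $1/(1-\rho_1^2\rho_2^2)$ when $n=1$), not the full-range double sum in the corollary. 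The two are genuinely different: at $n=2$, $x=y=z=0$, $\rho_1=\rho_2=\tfrac12$ one finds $A_2=-9/16$ (confirmed independently from the stated $D_2$ via (\ref{par1})--(\ref{par4}), which give $a+b=\rho_1y$, $ab=\rho_1^2$, etc.), so $A_2/((1-\rho_1^2)(1-\rho_2^2))=-1$, whereas the printed right-hand side evaluates to $-5/4$ because of the spurious $m=2$ term $\rho_1^2P_2(y|x,\rho_1,0)/(\rho_1^2;0)_2$. Your proposed cross-checks would not have caught this: the $n=1$ case exhausts $m=0,1$ anyway (and even there the correct $m=1$ term carries the sign $(-1)^1$, absent from the all-positive first sum of the display), and you never carry out the comparison with the $D_n$ branch. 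So the proposal proves the corrected two-term identity, but it cannot be completed into a proof of the $A_n$ formula as stated, because that formula is false; the missing step is precisely the one you postponed.
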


The main results of the paper concern calculating values of the functions
defined by: 
\begin{equation*}
C_{n}\left( y,z|\rho _{1},\rho _{2},q\right) \allowbreak =\allowbreak
\int_{S\left( q\right) }H_{n}\left( x|q\right) \phi \left( x|y,\rho
_{1},z,\rho _{2},q\right) dx,
\end{equation*}%
$n\geq 1$. These functions have on one hand nice probabilistic
interpretation. Namely assuming that certain $3-$dimensional random vector $%
(Y,X,Z)$ has density equal to $f_{CN}\left( z|x,\rho _{2},q\right)
f_{CN}\left( x|y,\rho _{1},q\right) f_{N}\left( y|q\right) ,$ then 
\begin{equation}
C_{n}\left( y,z|\rho _{1},\rho _{2},q\right) =\mathbb{E}\left( H_{n}\left(
X|q\right) |Y=y,Z=z\right) \allowbreak ,  \label{interpretacja}
\end{equation}%
for almost all (with respect to measure with density $f_{CN}\left( y|z,\rho
_{1}\rho _{2},q\right) f_{N}\left( z|q\right) $) $\left( y,z\right) \in
S\left( q\right) \times S\left( q\right) $. This fact implies in particular
that for almost all $\left( y,z\right) \allowbreak \in \allowbreak S\left(
q\right) \times S\left( q\right) $ we have: $\left\vert C_{n}\left( y,z|\rho
_{1},\rho _{2},q\right) \right\vert \leq \frac{s_{n}\left( q\right) }{\left(
1-q\right) ^{n/2}}.$

\begin{remark}
In \cite{Szab5} it has been shown that functions $C_{n}$ are polynomials in $%
y$ and $z$ of order at most $n$. More precisely it has been shown that 
\begin{equation*}
C_{n}\left( y,z|\rho _{1},\rho _{2},q\right) =\sum_{r=0}^{\left\lfloor
n/2\right\rfloor }\sum_{l=0}^{n-2r}A_{r,-\left\lfloor n/2\right\rfloor
+r+l}^{\left( n\right) }H_{l}\left( y|q\right) H_{n-2r-l}\left( z|q\right) ,
\end{equation*}%
where there are $\left\lfloor \frac{n+2}{2}\right\rfloor \left\lfloor \frac{%
n+3}{2}\right\rfloor $ constants (depending only on $n,$ $q,\rho _{1},\rho
_{2})$ $A_{r,s}^{(n)};$. $r\allowbreak =\allowbreak 0,\ldots ,\left\lfloor
n/2\right\rfloor ,$ $s\allowbreak =\allowbreak -\left\lfloor
n/2\right\rfloor +r,\ldots ,-\left\lfloor n/2\right\rfloor +r\allowbreak
+\allowbreak n-2r$. However the exact general form of these constants was
not found (except for the cases $n=1,2,3,4).$
\end{remark}

As announced in the introduction, in the present paper we will, express the
polynomials $C_{n}$ in terms of the polynomials $H_{n}$ and (or) $P_{n}$.

Namely we will prove the following Theorem:

\begin{theorem}
\label{main}$\forall n\geq 1,\left\vert q\right\vert <1,\left\vert \rho
_{1}\right\vert ,\left\vert \rho _{2}\right\vert <1:$%
\begin{gather}
C_{n}\left( y,z|\rho _{1},\rho _{2},q\right) =\frac{1}{\left( \rho
_{1}^{2}\rho _{2}^{2}\right) _{n}}\sum_{k=0}^{\left\lfloor n/2\right\rfloor
}(-1)^{k}q^{\binom{k}{2}}\QATOPD[ ] {n}{2k}_{q}\QATOPD[ ] {2k}{k}_{q}\left[ k%
\right] _{q}!\times  \label{_C1} \\
\rho _{2}^{2k}\rho _{1}^{2k}\left( \rho _{1}^{2},\rho _{2}^{2}\right)
_{k}\sum_{j=0}^{n-2k}\QATOPD[ ] {n-2k}{j}_{q}\left( \rho
_{1}^{2}q^{k}\right) _{j}\left( \rho _{2}^{2}q^{k}\right) _{n-2k-j}\rho
_{1}^{n-2k-j}\rho _{2}^{j}H_{j}\left( z|q\right) H_{n-2k-j}(y|q).
\label{_C2}
\end{gather}
\end{theorem}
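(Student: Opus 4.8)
The plan is to collapse the conditional integral to an ordinary one, evaluate that one as an explicit $q$-Hermite series, and then recover $C_n$ by an orthogonal expansion that avoids dividing series formally. First I would invoke assertion i) of Remark~\ref{U1} to cancel the $f_N(y|q)$ factors and write
\[
C_n(y,z|\rho_1,\rho_2,q)\,f_{CN}(z|y,\rho_1\rho_2,q)=J_n(y,z):=\int_{S(q)}H_n(x|q)f_{CN}(z|x,\rho_2,q)f_{CN}(x|y,\rho_1,q)\,dx .
\]
Here $J_n$ is an honest integral of $H_n$ against a product of two Poisson--Mehler kernels, so all of Proposition~\ref{znane} applies to it directly.

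Second, I would evaluate $J_n$ in closed form. Expanding $f_{CN}(z|x,\rho_2,q)$ by the Poisson--Mehler formula (Proposition~\ref{znane}~vi)) reduces $J_n/f_N(z|q)$ to $\sum_l\frac{\rho_2^l}{[l]_q!}H_l(z|q)\int_{S(q)}H_n(x|q)H_l(x|q)f_{CN}(x|y,\rho_1,q)\,dx$. Each inner integral is handled by linearizing $H_nH_l$ with the product formula (\ref{identity}) and then applying the reproducing identity $\int_{S(q)}H_m(x|q)f_{CN}(x|y,\rho_1,q)\,dx=\rho_1^mH_m(y|q)$ of Proposition~\ref{znane}~ii). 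This exhibits $J_n$ as a fully explicit double sum over the linearly independent products $H_l(z|q)H_{n+l-2j}(y|q)$.

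Third --- the crux --- I must undo the multiplication by $f_{CN}(z|y,\rho_1\rho_2,q)$ without dividing divergent-looking series. Since $C_n(y,\cdot)$ is a polynomial in $z$ of degree at most $n$ (as recalled in the Remark after \cite{Szab5}), I would expand it in the Al-Salam--Chihara basis $\{P_t(z|y,\rho_1\rho_2,q)\}_{t\le n}$, which Proposition~\ref{znane}~iii) shows is orthogonal with respect to $f_{CN}(z|y,\rho_1\rho_2,q)$ with explicit norms $(\rho_1^2\rho_2^2)_t[t]_q!$. Multiplying the displayed identity by $P_t(z|y,\rho_1\rho_2,q)$ and integrating in $z$ then isolates the coefficient of $P_t$ as $((\rho_1^2\rho_2^2)_t[t]_q!)^{-1}\int_{S(q)}J_n(y,z)P_t(z|y,\rho_1\rho_2,q)\,dz$. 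To evaluate this $z$-integral I would push $P_t$ through the connection formula of Proposition~\ref{connection}~i), writing it via $H_a(z|q)$ and $B_{t-a}(y|q)$, and again use the reproducing identity; the $B_{t-a}(y|q)$ factors thrown off here are precisely why the auxiliary polynomials $B_n$ and their $H$-conversion formulae of Lemma~\ref{BH} must enter the computation.

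Finally, I would reassemble $C_n=\sum_{t\le n}c_t(y)P_t(z|y,\rho_1\rho_2,q)$, re-expand each $P_t$ back into $H_j(z|q)$, and convert every surviving product $H_\cdot(y|q)B_\cdot(y|q)$ into single $q$-Hermite values using Lemma~\ref{BH}~iv) together with the collapse identity Lemma~\ref{BH}~iii) for $I_{n,m}$. What remains is pure $q$-series bookkeeping: reindexing the nested sums (over $t$, $a$, and the linearization index) into the advertised double sum over $(k,j)$, using the $q$-Pochhammer identities of Lemma~\ref{nawiasy} to combine the norm factors via $(\rho_1^2\rho_2^2)_n=(\rho_1^2\rho_2^2)_t(\rho_1^2\rho_2^2q^t)_{n-t}$ so as to expose the single prefactor $1/(\rho_1^2\rho_2^2)_n$ and to organize the rest into $(\rho_1^2q^k)_j(\rho_2^2q^k)_{n-2k-j}$. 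The weight $(-1)^kq^{\binom{k}{2}}\QATOPD[ ] {2k}{k}_q[k]_q!$ is exactly the coefficient produced by the $B\leftrightarrow H$ conversion of Lemma~\ref{BH}~i)/iv), so I expect the real difficulty to lie in carrying out this collapse cleanly --- matching the $B_n$-machinery against the Pochhammer algebra --- rather than in any isolated estimate.
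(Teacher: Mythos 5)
Your plan is correct and follows essentially the same route as the paper: the paper's Lemma \ref{UogCarl} is exactly your projection of $C_n$ onto the Al-Salam--Chihara basis $\{P_t(z|y,\rho_1\rho_2,q)\}$ (carried out there via the ratios $\gamma_{m,k}/\gamma_{0,0}$, using the same Poisson--Mehler expansion, linearization, connection formula and the collapse of $I_{n,m}$ from Lemma \ref{BH}), and the paper's proof of Theorem \ref{main} then performs precisely the re-expansion of the $P_t$'s, the $B\cdot H\to H$ conversion via Lemma \ref{BH}~iv), and the Pochhammer bookkeeping with Lemma \ref{nawiasy} that you describe. The only difference is organizational --- you compute the $P_t$-coefficients of $C_n$ in one pass instead of first isolating $\gamma_{m,k}/\gamma_{0,0}$ --- so nothing essential is changed.
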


Before presentation of the proof let us make two immediate remarks.

\begin{remark}
Notice that for, say $\rho _{1}\allowbreak =\allowbreak 0$ we get $%
C_{n}\left( y,z|0,\rho _{2},q\right) \allowbreak =\allowbreak \rho
_{2}^{n}H_{n}\left( z|q\right) $ which agrees nicely with the probabilistic
interpretation of the function $C_{n}$ given above. Compare also assertion
ii) of the Proposition \ref{znane}. It is so since $C_{n}\left( y,z|0,\rho
,q\right) \allowbreak =\allowbreak \mathbb{E}\left( H_{n}\left( X|q\right)
|Z=z\right) \allowbreak =\allowbreak \rho ^{n}H_{n}\left( z|q\right) $ a.s.,
($f_{N})$ if $\left( Y,Z\right) \allowbreak \sim \allowbreak f_{CN}\left(
y|z,\rho ,q\right) f_{N}\left( z|q\right) $ as shown in \cite{bryc1}.
\end{remark}

\begin{remark}
Keeping in mind the probabilistic interpretation of the functions $C_{n}$
given in (\ref{interpretacja}), notice that the assertion of Theorem \ref%
{main} enables calculation of all moments of the AW density for complex
parameters. Recently S. Corteel at al. in \cite{Corteel10} announced that
she is going to calculate these moments by some combinatorial methods.
\end{remark}

The proof of this Theorem is based on the following Lemma that in another
form and with the different proof (based heavily on the assertion i) of
Lemma \ref{BH}) was presented in \cite{Szab5}. Notice that assertion i) of
this Lemma is in fact a generalization of an old result of Carlitz \cite%
{Carlitz72} (see also \cite{ALIs88} or partially \cite{IA}, Exercise
12.3(d)). Besides, in this Lemma we present an alternative form of the
function $C_{n}$ this time expressed through polynomials $H_{n}$ and $P_{n}$.

\begin{lemma}
\label{UogCarl}Let us denote $\gamma _{m,k}\left( x,y|\rho ,q\right)
\allowbreak =\allowbreak \sum_{i=0}^{\infty }\frac{\rho ^{i}}{\left[ i\right]
_{q}!}H_{i+m}\left( x|q\right) H_{i+k}\left( y|q\right) \allowbreak $. Then

i) $\gamma _{m,k}\left( x,y|\rho ,q\right) \allowbreak =\allowbreak \gamma
_{0,0}\left( x,y|\rho ,q\right) \sum_{s=0}^{k}(-1)^{s}q^{\binom{s}{2}}\QATOPD%
[ ] {k}{s}_{q}\rho ^{s}H_{k-s}\left( y|q\right) P_{m+s}(x|y,\rho ,q)/(\rho
^{2})_{m+s},$

ii) $C_{n}\left( y,z|\rho _{1},\rho _{2},q\right) \allowbreak =\allowbreak
\sum_{s=0}^{n}\allowbreak \QATOPD[ ] {n}{s}_{q}\rho _{1}^{n-s}\rho
_{2}^{s}\left( \rho _{1}^{2}\right) _{s}H_{n-s}\left( y|q\right) P_{s}\left(
z|y,\rho _{1}\rho _{2},q\right) /(\rho _{1}^{2}\rho _{2}^{2})_{s}$.
\end{lemma}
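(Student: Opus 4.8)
The plan is to prove part i) first and then derive ii) from it by a standard pair of integrations; inside i) I isolate the case $k=0$ (the generalized Carlitz identity) and then bootstrap to general $k$. Throughout, for $\left\vert \rho \right\vert <1$, $\left\vert q\right\vert <1$ and $x,y\in S\left( q\right) $ all the $q$-Hermite series in sight converge absolutely and uniformly by (\ref{ogr_H}) and assertion v) of Proposition \ref{znane}, so every interchange of summation and integration below is legitimate. \textbf{Step 1 ($k=0$).} I claim $\gamma _{m,0}\left( x,y|\rho ,q\right) \left( \rho ^{2}\right) _{m}=\gamma _{0,0}\left( x,y|\rho ,q\right) P_{m}\left( x|y,\rho ,q\right) $. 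Since Poisson--Mehler (assertion vi) of Proposition \ref{znane}) gives $\gamma _{0,0}\left( x,y|\rho ,q\right) =f_{CN}\left( x|y,\rho ,q\right) /f_{N}\left( x|q\right) $, this is equivalent to $\left( \rho ^{2}\right) _{m}f_{N}\left( x|q\right) \gamma _{m,0}\left( x,y|\rho ,q\right) =f_{CN}\left( x|y,\rho ,q\right) P_{m}\left( x|y,\rho ,q\right) $. The family $\left\{ P_{r}\left( x|y,\rho ,q\right) \right\} _{r}$ is complete and orthogonal in $L^{2}\left( f_{CN}\left( \cdot |y,\rho ,q\right) \right) $ (orthogonal polynomials for a compactly supported measure when $\left\vert q\right\vert <1$), so it suffices to match the inner products of both sides against each $P_{r}$. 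On the right, orthogonality (assertion iii) of Proposition \ref{znane}) yields $\delta _{rm}\left( \rho ^{2}\right) _{m}\left[ m\right] _{q}!$. On the left I expand $P_{r}$ by assertion i) of Proposition \ref{connection} and integrate term by term against $H_{i+m}f_{N}$ using assertion i) of Proposition \ref{znane}; only the index $i+m$ survives, and after simplifying the $q$-factorials the left inner product collapses to $\left( \rho ^{2}\right) _{m}\rho ^{r-m}\frac{\left[ r\right] _{q}!}{\left[ r-m\right] _{q}!}\sum_{i=0}^{r-m}\QATOPD[ ] {r-m}{i}_{q}B_{r-m-i}\left( y|q\right) H_{i}\left( y|q\right) $. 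By the vanishing identity in assertion ii) of Proposition \ref{connection} this sum is $0$ unless $r=m$, where it equals $1$, so the left inner product is again $\delta _{rm}\left( \rho ^{2}\right) _{m}\left[ m\right] _{q}!$, proving Step 1.

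\textbf{Step 2 (general $k$).} Using Step 1 to rewrite $\gamma _{0,0}P_{m+s}/\left( \rho ^{2}\right) _{m+s}$ as $\gamma _{m+s,0}$, the claimed right-hand side of i) becomes $\sum_{s=0}^{k}\left( -1\right) ^{s}q^{\binom{s}{2}}\QATOPD[ ] {k}{s}_{q}\rho ^{s}H_{k-s}\left( y|q\right) \gamma _{m+s,0}\left( x,y|\rho ,q\right) $. Expanding each $\gamma _{m+s,0}$ and comparing, in the basis $\left\{ H_{p}\left( x|q\right) \right\} _{p}$, the coefficient of $H_{m+n}\left( x|q\right) $ for each $n\geq 0$, the identity i) reduces to the purely $q$-Hermite statement $H_{n+k}\left( y|q\right) =\sum_{s=0}^{\min (k,n)}\left( -1\right) ^{s}q^{\binom{s}{2}}\QATOPD[ ] {k}{s}_{q}\frac{\left[ n\right] _{q}!}{\left[ n-s\right] _{q}!}H_{k-s}\left( y|q\right) H_{n-s}\left( y|q\right) $. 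I would prove this by linearizing each product $H_{k-s}H_{n-s}$ through (\ref{identity}) and then checking that the resulting coefficient of $H_{n+k-2t}\left( y|q\right) $ equals $1$ for $t=0$ and vanishes for every $t>0$; the vanishing is a $q$-Chu--Vandermonde type summation that follows from the $q$-binomial theorem as packaged in Lemma \ref{nawiasy}.

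\textbf{Part ii).} By assertion i) of Remark \ref{U1}, $C_{n}\left( y,z|\rho _{1},\rho _{2},q\right) =\frac{1}{f_{CN}\left( z|y,\rho _{1}\rho _{2},q\right) }\int_{S\left( q\right) }H_{n}\left( x|q\right) f_{CN}\left( z|x,\rho _{2},q\right) f_{CN}\left( x|y,\rho _{1},q\right) dx$. I expand $H_{n}\left( x|q\right) $ in the basis $\left\{ P_{j}\left( x|y,\rho _{1},q\right) \right\} $ by assertion iii) of Proposition \ref{connection}, expand $f_{CN}\left( z|x,\rho _{2},q\right) $ by Poisson--Mehler, and integrate against $f_{CN}\left( x|y,\rho _{1},q\right) $ using the orthogonality iii) of Proposition \ref{znane}; the surviving integrals assemble into $\gamma _{j,0}\left( z,y|\rho _{1}\rho _{2},q\right) $. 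Applying Step 1 to each $\gamma _{j,0}$ and using $f_{N}\left( z|q\right) \gamma _{0,0}\left( z,y|\rho _{1}\rho _{2},q\right) =f_{CN}\left( z|y,\rho _{1}\rho _{2},q\right) $ cancels the prefactor $1/f_{CN}\left( z|y,\rho _{1}\rho _{2},q\right) $ and leaves precisely the stated sum (with $s=j$).

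I expect the main obstacle to be the closing $q$-binomial identity in Step 2, that is, the vanishing of every coefficient of $H_{n+k-2t}\left( y|q\right) $ with $t>0$, together with the bookkeeping needed to justify the termwise rearrangements. By contrast, Step 1 and part ii) are essentially clean consequences of the orthogonality relations and connection coefficients already recorded in Propositions \ref{znane} and \ref{connection}.
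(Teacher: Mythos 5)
Your proposal is correct, but it reaches both assertions by a genuinely different route than the paper. For part i) the paper computes, in one stroke for arbitrary $k$, the Fourier coefficients $\int_{S(q)}P_{n}\left( x|y,\rho ,q\right) \gamma _{m,k}\left( x,y|\rho ,q\right) f_{N}\left( x|q\right) dx$ by expanding $P_{n}$ through assertion i) of Proposition \ref{connection}; the resulting sum is exactly $\frac{\left[ n\right] _{q}!\rho ^{n-m}}{\left[ n-m\right] _{q}!}I_{n-m,k}\left( y|q\right) $, so the whole weight of the argument rests on the closed form of $I_{n,m}$ in assertion iii) of Lemma \ref{BH}, whose inductive proof occupies a good part of Section \ref{dowody}. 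You instead run that Fourier computation only for $k=0$, where the needed input degenerates to the elementary vanishing identity of assertion ii) of Proposition \ref{connection} (i.e. $I_{n,0}=0$), and then bootstrap to general $k$ by comparing coefficients in the $H_{p}(x|q)$ basis, which reduces i) to the inverse linearization identity $H_{n+k}=\sum_{s}(-1)^{s}q^{\binom{s}{2}}\binom{k}{s}_{q}\frac{[n]_{q}!}{[n-s]_{q}!}H_{k-s}H_{n-s}$; your sketched proof of that identity is sound --- after linearizing $H_{k-s}H_{n-s}$ via (\ref{identity}) the coefficient of $H_{n+k-2t}$ collapses to a multiple of $\sum_{s=0}^{t}(-1)^{s}q^{\binom{s}{2}}\binom{t}{s}_{q}=(1;q)_{t}=\delta_{t0}$ --- so Lemma \ref{BH} ii)--iii) is bypassed entirely. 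For part ii) the difference is even more pronounced: the paper starts from the unproved-in-text identity $C_{n}=\gamma _{0,0}^{-1}\sum_{i}\binom{n}{i}_{q}\rho _{1}^{n-i}\rho _{2}^{i}\gamma _{i,n-i}$ and then needs the full strength of i), whereas you derive ii) directly from the integral representation via Remark \ref{U1} i), assertion iii) of Proposition \ref{connection}, Poisson--Mehler and orthogonality, and you only ever invoke the $k=0$ case of i). Your route is therefore more self-contained (it supplies the starting formula that the paper omits) and logically lighter, at the cost of an extra $q$-binomial summation that you have left as a verified-in-outline computation rather than written out; that computation, together with the routine justification that equal absolutely convergent $q$-Hermite series have equal coefficients, is all that separates your sketch from a complete proof.
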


\begin{proof}
The proof is shifted to section \ref{dowody}.
\end{proof}

As a corollary we get another property of the ASC polynomials:

\begin{corollary}
\label{Al-Salam}$P_{m}\left( y|x,\rho ,q\right) /\left( \rho ^{2}\right)
_{m}\allowbreak =\allowbreak \sum_{s=0}^{m}\left( -1\right) ^{s}\QATOPD[ ] {m%
}{s}_{q}q^{\binom{s}{2}}\rho ^{s}H_{m-s}\left( y|q\right) P_{s}\left(
x|y,\rho ,q\right) /\left( \rho ^{2}\right) _{s}.$
\end{corollary}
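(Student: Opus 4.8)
The plan is to derive the identity directly from assertion i) of Lemma \ref{UogCarl} by exploiting a symmetry of the auxiliary kernel $\gamma_{m,k}$ and evaluating one quantity in two ways. First I would record the obvious symmetry that follows from the defining series: since $i$ is a dummy index and multiplication is commutative,
\[
\gamma_{m,k}(x,y|\rho,q) = \sum_{i=0}^{\infty}\frac{\rho^{i}}{[i]_{q}!}H_{i+m}(x|q)H_{i+k}(y|q) = \gamma_{k,m}(y,x|\rho,q).
\]
In particular $\gamma_{0,0}(x,y|\rho,q)=\gamma_{0,0}(y,x|\rho,q)$, and by assertion vi) of Proposition \ref{znane} this common value equals $f_{CN}(x|y,\rho,q)/f_{N}(x|q)$, which by assertion vii) of the same Proposition is bounded below by a strictly positive constant; hence it never vanishes and may be cancelled.

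Next I would evaluate $\gamma_{0,m}(x,y|\rho,q)$ in two different ways. Applying assertion i) of Lemma \ref{UogCarl} with first index equal to $0$ and second index equal to $m$ gives
\[
\gamma_{0,m}(x,y|\rho,q) = \gamma_{0,0}(x,y|\rho,q)\sum_{s=0}^{m}(-1)^{s}q^{\binom{s}{2}}\QATOPD[ ]{m}{s}_{q}\rho^{s}H_{m-s}(y|q)P_{s}(x|y,\rho,q)/(\rho^{2})_{s}.
\]
On the other hand, using the symmetry above to write $\gamma_{0,m}(x,y|\rho,q)=\gamma_{m,0}(y,x|\rho,q)$ and then applying assertion i) with first index $m$, second index $0$ and the variables interchanged, the inner sum collapses to its single $s=0$ term (the factor $\QATOPD[ ]{0}{s}_{q}$ together with $H_{-s}$ forces $s=0$, with $H_{0}(x|q)=1$), yielding
\[
\gamma_{0,m}(x,y|\rho,q) = \gamma_{0,0}(y,x|\rho,q)P_{m}(y|x,\rho,q)/(\rho^{2})_{m}.
\]

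Finally I would equate the two displayed expressions, cancel the common nonzero factor $\gamma_{0,0}(x,y|\rho,q)=\gamma_{0,0}(y,x|\rho,q)$, and leave the index named $m$. This reproduces the claimed formula verbatim. There is essentially no analytic obstacle: the whole argument consists of two substitutions into Lemma \ref{UogCarl} i) combined with the trivial symmetry of $\gamma$. The only step needing explicit justification is the legitimacy of dividing by $\gamma_{0,0}$, which is precisely what assertion vii) of Proposition \ref{znane} supplies; alternatively one could read the identity as an equality of polynomials in $y$ for each fixed $x$ and match coefficients, dispensing with the division entirely.
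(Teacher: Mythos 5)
Your proof is correct and follows essentially the same route as the paper: both evaluate $\gamma_{0,m}(x,y|\rho,q)$ in two ways via the symmetry $\gamma_{m,k}(x,y|\rho,q)=\gamma_{k,m}(y,x|\rho,q)$ and assertion i) of Lemma \ref{UogCarl}, then cancel the nonvanishing factor $\gamma_{0,0}$. Your version is in fact slightly more careful, since you explicitly justify the division by $\gamma_{0,0}$ through Proposition \ref{znane} vii) (the paper's citation of ``assertion ii)'' for the collapsed expression appears to be a slip for assertion i) with $k=0$ after swapping variables, which is exactly what you do).
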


\begin{proof}
Note that $\gamma _{m,k}\left( x,y|\rho ,q\right) \allowbreak =\allowbreak
\gamma _{k,m}\left( y,x|\rho ,q\right) $. From assertion ii) of Lemma \ref%
{UogCarl} it follows that on one hand $\gamma _{0,m}\left( x,y|\rho
,q\right) \allowbreak =\allowbreak \gamma _{0,0}\left( x,y|\rho ,q\right)
P_{m}\left( y|x,\rho ,q\right) /\left( \rho ^{2}\right) _{m}$. On the other
hand from assertion i) it follows that \newline
$\gamma _{0,m}\left( x,y|\rho ,q\right) \allowbreak =\allowbreak \gamma
_{0,0}\left( x,y|\rho ,q\right) \sum_{s=0}^{m}(-1)^{s}q^{\binom{s}{2}}\QATOPD%
[ ] {m}{s}_{q}\rho ^{s}H_{m-s}\left( y|q\right) P_{s}(x|y,\rho ,q)/(\rho
^{2})_{s}$.
\end{proof}

As another consequence of Theorem \ref{main} and assertions v) and vii) of
Proposition \ref{znane} we get the following Theorem:

\begin{theorem}
\label{expansion}$\forall -1<q\leq 1,x,y,z\in S\left( q\right) ,|\rho
_{1}|,\left\vert \rho _{2}\right\vert <1,$%
\begin{equation}
\phi \left( x|y,\rho _{1},z,\rho _{2},q\right) =f_{N}\left( x|q\right)
\sum_{i=0}^{\infty }\frac{1}{\left[ i\right] _{q}!}H_{i}\left( x|q\right)
C_{i}\left( y,z|\rho _{1},\rho _{2},q\right) ,  \label{density_exp.}
\end{equation}%
where convergence is absolute and almost uniform on compact sets.
\end{theorem}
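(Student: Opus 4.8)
The plan is to read the claimed series as the $q$-Hermite expansion of the function $x\mapsto \phi(x|y,\rho_1,z,\rho_2,q)/f_N(x|q)$ and to justify its convergence from the explicit shape of $C_n$ supplied by Theorem \ref{main} together with the summation formula v) of Proposition \ref{znane}. First I would note, from the defining relation $C_i=\int_{S(q)}H_i(x|q)\phi(x|\ldots)\,dx$ and the orthogonality relation i) of Proposition \ref{znane}, that the only candidate for the coefficient of $H_i(\cdot|q)$ in an expansion $\phi/f_N=\sum_i c_iH_i(\cdot|q)$ that may be integrated term by term against $H_m(\cdot|q)f_N$ is $c_i=C_i/[i]_q!$. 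It therefore suffices to prove that the right-hand side of (\ref{density_exp.}) converges absolutely and almost uniformly and that its sum equals $\phi$.

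For the convergence I would extract a geometric factor from $C_n$. Put $\mu=\max(|\rho_1|,|\rho_2|)<1$. Every monomial in the explicit formula for $C_n$ furnished by Theorem \ref{main} (or, more transparently, by assertion ii) of Lemma \ref{UogCarl}) carries a combined power of $\rho_1,\rho_2$ of at least $n$; hence, after bounding the $q$-Hermite and Al-Salam--Chihara polynomials occurring there by $s_m(q)(1-q)^{-m/2}$ through (\ref{ogr_H}) and collapsing the $q$-binomial sums, one is led to an estimate of the form
\[
\left|C_n(y,z|\rho_1,\rho_2,q)\right|\le K(y,z,\rho_1,\rho_2,q)\,\mu^{n}\,s_n(q)(1-q)^{-n/2},
\]
uniform for $y,z$ in a compact subset of $S(q)$. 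Using again $|H_n(x|q)|\le s_n(q)(1-q)^{-n/2}$ and $[n]_q!=(q)_n(1-q)^{-n}$, the general term of the majorant becomes
\[
\frac{|C_n|\,|H_n(x|q)|}{[n]_q!}\le K\,\frac{\mu^{n}s_n^{2}(q)}{(q)_n},
\]
and the second identity in v) of Proposition \ref{znane}, evaluated at $t=\mu$, gives $\sum_n \mu^{n}s_n^{2}(q)/(q)_n=(\mu^{2})_\infty/(\mu)_\infty^{4}<\infty$. As this dominating bound is independent of $x\in S(q)$ and uniform in $y,z$ on compacta, the series in (\ref{density_exp.}) converges absolutely and almost uniformly to a continuous function $g(x)=f_N(x|q)\sum_i C_iH_i(x|q)/[i]_q!$.

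To identify $g$ with $\phi$ I would invoke assertion vii) of Proposition \ref{znane}. By assertion i) of Remark \ref{U1} the ratio $\phi(x|y,\rho_1,z,\rho_2,q)/f_N(x|q)$ equals $f_{CN}(z|x,\rho_2,q)\,f_{CN}(x|y,\rho_1,q)\big/\big(f_{CN}(z|y,\rho_1\rho_2,q)\,f_N(x|q)\big)$; the factor $f_{CN}(x|y,\rho_1,q)/f_N(x|q)$ and, after inserting $f_N(z|q)$, the factor $f_{CN}(z|x,\rho_2,q)/f_N(z|q)$ are bounded above by vii), while the $y,z$-dependent quantity $f_{CN}(z|y,\rho_1\rho_2,q)$ is a fixed positive constant bounded below by vii). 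Hence $\phi/f_N$ is bounded on $S(q)$, so it lies in $L^2(f_N(\cdot|q))$. Because the measure $f_N(\cdot|q)\,dx$ is compactly supported (for $|q|<1$) and therefore determinate, the polynomials $\{H_n(\cdot|q)\}$ are a complete orthogonal system there, so $\phi/f_N$ equals its own $q$-Hermite series in $L^2$, with coefficients $C_i/[i]_q!$ by the first step. The $L^2$-limit and the almost-uniform limit $g/f_N$ agree almost everywhere, and both functions being continuous on $S(q)$ they coincide everywhere, which is (\ref{density_exp.}). Equivalently, one integrates $g$ against each $H_m(\cdot|q)f_N$, interchanges sum and integral by the uniform convergence, and reads off from orthogonality that $g$ and $\phi$ share all $q$-Hermite moments.

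I expect the one genuine obstacle to be the convergence estimate, namely producing the bound on $|C_n|$ with a truly geometric factor $\mu^{n}$. The crude probabilistic bound $|C_n|\le s_n(q)(1-q)^{-n/2}$ recorded after (\ref{interpretacja}) corresponds to $\mu=1$, for which the majorant $\sum_n s_n^{2}(q)/(q)_n$ diverges, its generating function in v) blowing up as $t\to1$; one must therefore use $|\rho_1|,|\rho_2|<1$ through the explicit formula of Theorem \ref{main} and control the Al-Salam--Chihara polynomials $P_s$ uniformly on $S(q)$, which is where the care lies (any merely subexponential loss in the bound is harmless, since $\mu<1$ leaves exponential room for v) to secure convergence). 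Finally the boundary value $q=1$, not covered by (\ref{ogr_H}) or v), is treated separately: there (\ref{density_exp.}) is the classical Hermite (Mehler-type) expansion of the Gaussian conditional density (\ref{_N}).
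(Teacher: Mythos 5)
Your proposal is correct, but the route to convergence is genuinely different from the paper's, and the comparison is worth recording. The paper obtains $L^{2}(f_{N})$-convergence of (\ref{density_exp.}) from the boundedness of $\phi /f_{N}$ (assertion vii) of Proposition \ref{znane} combined with Remark \ref{U1}), which is exactly your identification step; to upgrade this to pointwise convergence it then regards the expansion as a triple orthogonal series in $H_{n}(x|q)H_{j}(y|q)H_{m}(z|q)$ on $S(q)^{3}$, reads the Fourier coefficients $\alpha _{n,j,m}$ off (\ref{_C1})--(\ref{_C2}), observes that they decay geometrically, and applies the Rademacher--Menshov theorem to get almost everywhere convergence before regrouping the nonzero terms. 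You instead prove absolute and locally uniform convergence head-on, by extracting the factor $\mu ^{n}$ with $\mu =\max (|\rho _{1}|,|\rho _{2}|)$ from the explicit form of $C_{n}$ and dominating the series by $\sum_{n}\mu ^{n}s_{n}^{2}(q)/(q)_{n}$ via assertion v) of Proposition \ref{znane}. Your version is arguably better matched to the statement: the theorem asserts absolute and almost uniform convergence, which your domination argument delivers directly, while Rademacher--Menshov by itself only yields convergence almost everywhere. The one detail you leave open --- a uniform bound on $P_{s}(z|y,\rho _{1}\rho _{2},q)$ (equivalently on $B_{n}(y|q)$) over $S(q)$ --- is available from assertion i) of Proposition \ref{connection} together with assertion i) of Lemma \ref{BH}: since $\binom{n}{2}-k(n-k)\geq 0$ for $0\leq k\leq \left\lfloor n/2\right\rfloor $, the factor $q^{\binom{n}{2}-k(n-k)}$ is bounded by $1$ in modulus, whence $\left\vert B_{n}(y|q)\right\vert $ grows at most polynomially times $(1-q)^{-n/2}$ on $S(q)$; this subexponential loss is, as you observe, absorbed by $\mu ^{n}<1$. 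With that supplied, and with $q=1$ reduced to the classical Mehler kernel exactly as in the paper, your proof is complete.
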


\begin{proof}
Is shifted to section \ref{dowody}.
\end{proof}

\section{Open Problems\label{open}}

Notice that $\forall n\geq 1:$ $\int_{S\left( q\right) }(H_{n}\left(
x\right) -C_{n}\left( y,z|\rho _{1},\rho _{2},q\right) )\phi \left( x|y,\rho
_{1},z,\rho _{2},q\right) dx\allowbreak =\allowbreak \int_{S\left( q\right)
}A_{n}\left( x|y,\rho _{1},z,\rho _{2},q\right) \phi \left( x|y,\rho
_{1},z,\rho _{2},q\right) dx\allowbreak =\allowbreak 0$. Hence there must
exist polynomials $F_{n,i}\left( y,z|\rho _{1},\rho _{2},q\right) $ such
that: $\forall n\geq 1:$%
\begin{equation*}
A_{n}\left( x|y,\rho _{1},z,\rho _{2},q\right) \allowbreak =\allowbreak
\sum_{i=1}^{n}F_{n,i}\left( y,z|\rho _{1},\rho _{2},q\right) \left(
H_{i}\left( x\right) -C_{i}\left( y,z|\rho _{1},\rho _{2},q\right) \right) .
\end{equation*}

\begin{enumerate}
\item One would like to find these polynomials.

\item We have $F_{n,n}\left( y,z|\rho _{1},\rho _{2},q\right) \allowbreak
=\allowbreak 1$ (both $\left\{ H_{n}\left( x\right) -C_{n}\left( y,z|\rho
_{1},\rho _{2},q\right) \right\} $ and $\left\{ A_{n}\left( x|y,\rho
_{1},z,\rho _{2},q\right) \right\} $ are monic). When say $\rho
_{2}\allowbreak =\allowbreak 0$ (the ASC case) we have: 
\begin{equation*}
P_{n}\left( x|y,\rho _{1},q\right) \allowbreak =\allowbreak \sum_{i=1}^{n}%
\QATOPD[ ] {n}{i}_{q}\rho _{1}^{n-i}B_{n-i}\left( y|q\right) (H_{i}\left(
x\right) -\rho _{1}^{i}H_{i}\left( y|q\right) ),
\end{equation*}%
which is in fact combination of assertions i) and ii) of Proposition \ref%
{connection}. Thus one would like to ask if the functions $F_{n,i}\left(
y,z|\rho _{1},\rho _{2},q\right) $ also depend on $n-i?$

\item It was shown in \cite{bms} that $\sum_{j=0}^{n}\QATOPD[ ] {n}{j}%
_{q}B_{n-j}\left( y|q\right) H_{j}\left( y|q\right) \allowbreak =\allowbreak
0$ for $y\in S\left( q\right) $ and $n\geq 1.$ Is the same true for the
general case. Namely is it true that: $\forall n\geq 1,$ $y,z\in S\left(
q\right) $ 
\begin{equation*}
\sum_{j=0}^{n}F_{n,j}\left( y,z|\rho _{1},\rho _{2},q\right) C_{i}\left(
y,z|\rho _{1},\rho _{2},q\right) \allowbreak =\allowbreak 0?
\end{equation*}

\item If $q\allowbreak =\allowbreak 1$ we have $\frac{1}{\sqrt{2\pi (1-\rho
^{2})}}\int_{\mathbb{R}}H_{n}\left( x\right) \exp \left( -\frac{(x-\rho m)}{%
2(1-\rho ^{2})}\right) dx\allowbreak =\allowbreak \rho ^{n}H_{n}\left(
m\right) $ hence following observation (\ref{_N}) we deduce that the r\^{o}%
le of the parameter $\rho $ is now played by $\sqrt{\frac{\rho _{1}^{2}+\rho
_{2}^{2}-2\rho _{1}^{2}\rho _{2}^{2}}{1-\rho _{1}^{2}\rho _{2}^{2}}}$ and of 
$m$ by $\frac{y\rho _{1}\left( 1-\rho _{2}^{2}\right) +z\rho _{2}\left(
1-\rho _{1}^{2}\right) }{\sqrt{1-\rho _{1}^{2}\rho _{2}^{2}}\sqrt{\rho
_{1}^{2}+\rho _{2}^{2}-2\rho _{1}^{2}\rho _{2}^{2}}}$. Thus 
\begin{equation}
C_{n}\left( y,z|\rho _{1},\rho _{2},1\right) \allowbreak =\allowbreak \left( 
\sqrt{\frac{\rho _{1}^{2}+\rho _{2}^{2}-2\rho _{1}^{2}\rho _{2}^{2}}{1-\rho
_{1}^{2}\rho _{2}^{2}}}\right) ^{n}H_{n}\left( \frac{y\rho _{1}\left( 1-\rho
_{2}^{2}\right) +z\rho _{2}\left( 1-\rho _{1}^{2}\right) }{\sqrt{1-\rho
_{1}^{2}\rho _{2}^{2}}\sqrt{\rho _{1}^{2}+\rho _{2}^{2}-2\rho _{1}^{2}\rho
_{2}^{2}}}\right) .  \label{c_n_q=1}
\end{equation}%
Is it also true for $\left\vert q\right\vert <1$ with an obvious
modification that $H_{n}\left( x\right) $ is replaced by $H_{n}\left(
x|q\right) $. Most certainly not, but may be $C_{n}\left( y,z|\rho _{1},\rho
_{2},q\right) $ can be presented as a linear combination of expression of
this type, more compact than (\ref{_C1}), (\ref{_C2}). The problem is
connected with the problem of expressing $H_{n}\left( \alpha x+\beta
y|q\right) $ as a linear combination of $H_{i}\left( x|q\right) H_{j}\left(
y|q\right) ,$ $i+j\leq n.$ It has known, nice form for $q\allowbreak
=\allowbreak 1$ and neither nice nor known form for all $n\geq 1$ and other
values of $q.$
\end{enumerate}

\section{Proofs\label{dowody}}

\begin{proof}[Proof of Lemma \protect\ref{nawiasy}]
i) Let us denote $D_{n}\left( a\right) \allowbreak =\allowbreak
\sum_{k=0}^{n}\QATOPD[ ] {n}{k}_{q}\left( a\right) _{n-k}a^{k}\allowbreak $.
Let $\phi \left( t,a\right) \allowbreak =\allowbreak \sum_{n=0}^{\infty }%
\frac{t^{n}}{\left( q\right) _{n}}D_{n}\left( a\right) $ be a characteristic
function of the sequence $\left\{ D_{n}\left( a\right) \right\} $ $%
\allowbreak $. We have $\phi \left( t,a\right) \allowbreak =\allowbreak
\sum_{n=0}^{\infty }\frac{t^{n}}{\left[ n\right] _{q}!}\sum_{i=0}^{n}\QATOPD[
] {n}{i}_{q}\left( a\right) _{i}a_{n-i}\allowbreak =\allowbreak
\sum_{i=0}^{\infty }\frac{t^{i}}{\left( q\right) _{i}}\left( a\right)
_{i}\sum_{n=i}^{\infty }\frac{t^{n-i}}{\left( q\right) _{n-i}}%
a^{n-i}\allowbreak =\allowbreak \frac{1}{\left( at\right) _{\infty }}%
\sum_{i=0}^{\infty }\frac{t^{i}}{\left( q\right) _{i}}\left( a\right)
_{i}\allowbreak =\allowbreak \frac{1}{\left( at\right) _{\infty }}\frac{%
\left( at\right) _{\infty }}{\left( t\right) _{\infty }}\allowbreak
=\allowbreak \frac{1}{\left( t\right) _{\infty }}\allowbreak =\allowbreak
\sum_{n\geq 0}\frac{t^{n}}{\left( q\right) _{n}},$ by $q-$binomial theorem.
So $D_{n}\left( a\right) \allowbreak =\allowbreak 1$. Convergence was for $%
\left\vert q\right\vert ,\left\vert a\right\vert ,\left\vert t\right\vert <1$%
. Thus $D_{n}\left( a\right) $ for $\left\vert a\right\vert <1$ is constant,
but since it is a polynomial we deduce that $D_{n}\left( a\right) $ is
constant for all complex $a.$

ii) Using the expansion formula $\sum_{k=0}^{N}(-1)^{k}\QATOPD[ ] {N}{k}%
_{q}q^{\binom{k}{2}}x^{k}\allowbreak =\allowbreak (x)_{N}$, \newline
$\sum_{i=0}^{n}(-1)^{i}q^{\binom{i}{2}}\QATOPD[ ] {n}{i}_{q}(a)_{i}b^{i}%
\left( abq^{i}\right) _{n-i}\allowbreak =\allowbreak
\sum_{i=0}^{n}(-1)^{i}q^{\binom{i}{2}}\QATOPD[ ] {n}{i}_{q}b^{i}\left(
a\right) _{i}\sum_{k=0}^{n-i}\left( -1\right) ^{k}q^{\binom{k}{2}}\QATOPD[ ]
{n-i}{k}_{q}a^{k}b^{k}q^{ki}\allowbreak =\allowbreak \sum_{s=0}^{n}\left(
-1\right) ^{s}q^{\binom{s}{2}}\QATOPD[ ] {n}{s}_{q}b^{s}\sum_{k=0}^{s}\QATOPD%
[ ] {s}{k}_{q}a^{k}\left( a\right) _{s-k}\allowbreak =$\newline
$\allowbreak \sum_{s=0}^{n}\left( -1\right) ^{s}q^{\binom{s}{2}}\QATOPD[ ] {n%
}{s}_{q}b^{s}\allowbreak =\allowbreak \left( b\right) _{n}$ by i) and the
expansion formula.
\end{proof}

\begin{proof}[Proof of Lemma \protect\ref{BH}.]
ii) First let us recall that by assertion ii) of Proposition \ref{connection}
we have $I_{n,0}(x|q)\allowbreak =\allowbreak 0$ for $n\geq 1$. Next we have 
\begin{equation*}
I_{0,m}(x|q)\allowbreak =\allowbreak H_{m}\left( x|q\right)
,I_{1,m}(x|q)\allowbreak =\allowbreak -xH_{m}(x|q)+H_{m+1}(x|q)\allowbreak
=\allowbreak -[m]_{q}H_{m-1}(x|q).
\end{equation*}%
To prove ii) we apply the formula 
\begin{equation*}
H_{n}\left( x|q\right) H_{m}\left( x|q\right) \allowbreak =\allowbreak
H_{n+m}\left( x|q\right) \allowbreak +\allowbreak \sum_{k=1}^{\min \left(
n,m\right) }\QATOPD[ ] {m}{k}_{q}\QATOPD[ ] {n}{k}_{q}\left[ k\right]
_{q}!H_{n+m-2k}\left( x|q\right)
\end{equation*}%
and get 
\begin{gather*}
I_{n,m}\left( x|q\right) \allowbreak =\sum_{i=0}^{n}\QATOPD[ ] {n}{i}%
_{q}B_{n-i}\left( x|q\right) H_{i+m}\left( x|q\right) \allowbreak \\
=\allowbreak H_{m}\left( x|q\right) I_{n,0}(x|q)\allowbreak -\allowbreak
\sum_{i=1}^{n}\QATOPD[ ] {n}{i}_{q}B_{n-i}\left( x|q\right) \sum_{k=1}^{\min
(i,m)}\QATOPD[ ] {i}{k}_{q}\QATOPD[ ] {m}{k}_{q}\left[ k\right]
_{q}!H_{i+m-2k}\left( x|q\right) \allowbreak \\
=-\allowbreak \sum_{i=1}^{n}\QATOPD[ ] {n}{i}_{q}B_{n-i}\left( x|q\right)
\sum_{k=1}^{\min (i,m)}\QATOPD[ ] {i}{k}_{q}\QATOPD[ ] {m}{k}_{q}\left[ k%
\right] _{q}!H_{i+m-2k}\left( x|q\right) .
\end{gather*}%
After changing the order of summation we get:%
\begin{equation*}
I_{n,m}\left( x|q\right) \allowbreak =-\sum_{k=1}^{n}\QATOPD[ ] {m}{k}_{q}%
\QATOPD[ ] {n}{k}_{q}\left[ k\right] _{q}!\sum_{s=0}^{n-k}\QATOPD[ ] {n-k}{s}%
_{q}B_{n-k-s}\left( x|q\right) H_{s+m-k}\left( x|q\right) .
\end{equation*}%
iii) will be proved by induction with respect to $n$. Let us assume that the
assertion is true for all $n\leq k-1$ . By ii) we have $I_{k,m}\left(
x|q\right) \allowbreak =\allowbreak -\sum_{j=1}^{k}\QATOPD[ ] {m}{j}_{q}%
\QATOPD[ ] {k}{j}_{q}\left[ j\right] _{q}!I_{k-j,m-j}\left( x|q\right) $.
Now if $m<k$ we see that then $k-j<m-j$ for all $j=1,\ldots ,k$ and thus by
induction $I_{k-j,m-j}\left( x|q\right) \allowbreak =\allowbreak 0$. If $%
k\geq m$ then by the induction assumption we have $I_{k-j,m-j}\left(
x|q\right) \allowbreak =\allowbreak (-1)^{k-j}q^{\binom{k-j}{2}}\frac{\left[
m-j\right] _{q}!}{\left[ m-k\right] _{q}!}H_{m-k}\left( x|q\right) $. Hence 
\begin{gather*}
I_{k,m}\left( x|q\right) \allowbreak =\allowbreak -\sum_{j=1}^{k}\QATOPD[ ] {%
m}{j}_{q}\QATOPD[ ] {k}{j}_{q}\left[ j\right] _{q}(-1)^{k-j}q^{\binom{k-j}{2}%
}\frac{\left[ m-j\right] _{q}!}{\left[ m-k\right] _{q}!}H_{m-k}\left(
x|q\right) \allowbreak \\
=\allowbreak -\frac{\left[ m\right] _{q}!}{\left[ m-k\right] _{q}!}%
H_{m-k}\left( x|q\right) \sum_{j=1}^{k}\QATOPD[ ] {k}{j}_{q}(-1)^{k-j}q^{%
\binom{k-j}{2}}\allowbreak = \\
\allowbreak -\frac{\left[ m\right] _{q}!}{\left[ m-k\right] _{q}!}%
H_{m-k}\left( x|q\right) \sum_{s=0}^{k-1}\QATOPD[ ] {k}{s}_{q}(-1)^{s}q^{%
\binom{s}{2}}\allowbreak \\
=\allowbreak -\frac{\left[ m\right] _{q}!}{\left[ m-k\right] _{q}!}%
H_{m-k}\left( x|q\right) \allowbreak (\sum_{s=0}^{k-1}\QATOPD[ ] {k}{s}%
_{q}(-1)^{s}q^{\binom{s}{2}}\allowbreak +\allowbreak (-1)^{k}q^{\binom{k}{2}%
}\allowbreak -\allowbreak (-1)^{k}q^{\binom{k}{2}})\allowbreak \\
=\allowbreak (-1)^{k}q^{\binom{k}{2}}\frac{\left[ m\right] _{q}!}{\left[ m-k%
\right] _{q}!}H_{m-k}\left( x|q\right) ,
\end{gather*}%
since $\sum_{s=0}^{k-1}\QATOPD[ ] {k}{s}_{q}(-1)^{s}q^{\binom{s}{2}%
}\allowbreak +\allowbreak (-1)^{k}q^{\binom{k}{2}}\allowbreak =\allowbreak
(1)_{k}\allowbreak =\allowbreak 0$.
\end{proof}

\begin{proof}[Proof of assertion iii) of Theorem \protect\ref{reprezentacja}]

We start with the assertion of Corollary \ref{Al-Salam} and the assertion
iii) of Proposition \ref{znane}. Using them we get:%
\begin{eqnarray*}
&&\int_{S\left( q\right) }P_{m}\left( z|y,t,q\right) P_{k}\left(
y|z,t,q\right) f_{CN}\left( z|y,t,q\right) dz\allowbreak \\
&=&\allowbreak \left\{ 
\begin{array}{ccc}
0 & if & m>k \\ 
(-1)^{m}q^{\binom{m}{2}}\frac{\left[ k\right] _{q}!}{\left[ k-m\right] _{q}!}%
t^{m}H_{k-m}\left( y|q\right) \left( t^{2}\right) _{k} & if & m\leq k%
\end{array}%
\right. .
\end{eqnarray*}%
Using the assertion ii) of Theorem \ref{reprezentacja} let us calculate 
\newline
$V_{n,m}\left( x,z,\rho _{1},\rho _{2}|q\right) \allowbreak =\allowbreak
\int_{S\left( q\right) }A_{n}\left( x|y,\rho _{1},z,\rho _{2},q\right)
P_{m}\left( y|x,\rho _{1},q\right) f_{CN}\left( y|x,\rho _{1},q\right) dy$. $%
\allowbreak $We have%
\begin{gather*}
V_{n,m}\left( x,z,\rho _{1},\rho _{2}|q\right) =\frac{\left( \rho
_{1}^{2},\rho _{2}^{2}\right) _{n}}{\left( \rho _{1}^{2}\rho
_{2}^{2}q^{n-1}\right) _{n}}(-1)^{m}q^{\binom{m}{2}}\rho
_{1}^{m}\sum_{j=0}^{n}\QATOPD[ ] {n}{j}_{q}B_{n-j}\left( x|q\right) \\
\times \sum_{i=m}^{j}\QATOPD[ ] {j}{i}_{q}\frac{P_{j-i}\left( x|z,\rho
_{2},q\right) }{\left( \rho _{2}^{2}\right) _{j-i}}\frac{\left[ i\right]
_{q}!}{\left[ i-m\right] _{q}!}H_{i-m}\left( x|q\right) \\
=\frac{\left( \rho _{1}^{2},\rho _{2}^{2}\right) _{n}}{\left( \rho
_{1}^{2}\rho _{2}^{2}q^{n-1}\right) _{n}}(-1)^{m}q^{\binom{m}{2}}\rho
_{1}^{m}\sum_{j=m}^{n}\QATOPD[ ] {n}{j}_{q}B_{n-j}\left( x|q\right) \\
\times \sum_{k=0}^{j-m}\frac{\left[ j\right] _{q}!}{\left[ j-m-k\right] _{q}!%
\left[ k\right] _{q}!}\frac{P_{j-m-k}\left( x|z,\rho _{2},q\right) }{\left(
\rho _{2}^{2}\right) _{j-m-k}}H_{k}\left( x|q\right) \\
=\allowbreak \frac{\left( \rho _{1}^{2},\rho _{2}^{2}\right) _{n}}{\left(
\rho _{1}^{2}\rho _{2}^{2}q^{n-1}\right) _{n}}(-1)^{m}q^{\binom{m}{2}}\rho
_{1}^{m}\frac{\left[ n\right] _{q}!}{\left[ n-m\right] _{q}!}%
\sum_{s=0}^{n-m}B_{n-m-s}\left( x|q\right) \frac{\left[ n-m\right] _{q}!}{%
\left[ n-m-s\right] _{q}!\left[ s\right] _{q}!}\times \\
\allowbreak \sum_{k=0}^{s}\frac{\left[ s\right] _{q}!}{\left[ s-k\right]
_{q}!\left[ k\right] _{q}!}\frac{P_{s-k}\left( x|z,\rho _{2},q\right) }{%
\left( \rho _{2}^{2}\right) _{s-k}}H_{k}\left( x|q\right) .
\end{gather*}%
We change the order of summation and get%
\begin{gather*}
V_{n,m}\left( x,z,\rho _{1},\rho _{2}|q\right) =\frac{\left( \rho
_{1}^{2},\rho _{2}^{2}\right) _{n}}{\left( \rho _{1}^{2}\rho
_{2}^{2}q^{n-1}\right) _{n}}(-1)^{m}q^{\binom{m}{2}}\rho _{1}^{m}\frac{\left[
n\right] _{q}!}{\left[ n-m\right] _{q}!}\sum_{k=0}^{n-m}\QATOPD[ ] {n-m}{k}%
_{q}H_{k}(x|q)\times \\
\sum_{s=k}^{n-m}\QATOPD[ ] {n-m-k}{s-k}_{q}\frac{P_{s-k}\left( x|z,\rho
_{2},q\right) }{\left( \rho _{2}^{2}\right) _{s-k}}B_{n-m-s}\left( x|q\right)
\\
=\frac{\left( \rho _{1}^{2},\rho _{2}^{2}\right) _{n}}{\left( \rho
_{1}^{2}\rho _{2}^{2}q^{n-1}\right) _{n}}(-1)^{m}q^{\binom{m}{2}}\rho
_{1}^{m}\frac{\left[ n\right] _{q}!}{\left[ n-m\right] _{q}!}\sum_{k=0}^{n-m}%
\QATOPD[ ] {n-m}{k}_{q}H_{k}(x|q)\times \\
\sum_{j=0}^{n-m-k}\QATOPD[ ] {n-m-k}{j}_{q}\frac{P_{j}\left( x|z,\rho
_{2},q\right) }{\left( \rho _{2}^{2}\right) _{j}}B_{n-m-k-j}\left( x|q\right)
\\
=\frac{\left( \rho _{1}^{2},\rho _{2}^{2}\right) _{n}}{\left( \rho
_{1}^{2}\rho _{2}^{2}q^{n-1}\right) _{n}}(-1)^{m}q^{\binom{m}{2}}\rho
_{1}^{m}\frac{\left[ n\right] _{q}!}{\left[ n-m\right] _{q}!}\sum_{j=0}^{n-m}%
\QATOPD[ ] {n-m}{j}_{q}\frac{P_{j}\left( x|z,\rho _{2},q\right) }{\left(
\rho _{2}^{2}\right) _{j}}\times \\
\sum_{k=0}^{n-m-j}\QATOPD[ ] {n-m-j}{k}_{q}H_{k}(x|q)B_{n-m-k-j}\left(
x|q\right) .
\end{gather*}%
Now we use the assertion iii) of Lemma \ref{BH} and deduce that $\allowbreak
\sum_{k=0}^{n-m-j}\QATOPD[ ] {n-m-j}{k}_{q}H_{k}(x|q)B_{n-m-k-j}\left(
x|q\right) =\allowbreak 0$ if only $n-m-j>0$. and $1$ if $j\allowbreak
=\allowbreak n-m$. Hence $V_{n,m}\left( x,z,\rho _{1},\rho _{2}|q\right)
\allowbreak =\allowbreak \frac{\left( \rho _{1}^{2},\rho _{2}^{2}\right) _{n}%
}{\left( \rho _{1}^{2}\rho _{2}^{2}q^{n-1}\right) _{n}}(-1)^{m}q^{\binom{m}{2%
}}\rho _{1}^{m}\frac{\left[ n\right] _{q}!}{\left[ n-m\right] _{q}!}\frac{%
P_{n-m}\left( x|z,\rho _{2},q\right) }{\left( \rho _{2}^{2}\right) _{n-m}}$.
Keeping in mind the assertion iii of Proposition \ref{znane} and the
interpretation of $V_{n,m}$ we get \newline
$A_{n}\left( x|y,\rho _{1},z,\rho _{2},q\right) \allowbreak =\frac{\left(
\rho _{1}^{2},\rho _{2}^{2}\right) _{n}}{\left( \rho _{1}^{2}\rho
_{2}^{2}q^{n-1}\right) _{n}}\sum_{m=0}^{n}\QATOPD[ ] {n}{m}_{q}\left(
-1\right) ^{m}q^{\binom{m}{2}}\rho _{1}^{m}\frac{P_{n-m}\left( x|z,\rho
_{2},q\right) P_{m}\left( y|x,\rho _{1},q\right) }{\left( \rho
_{2}^{2}\right) _{n-m}(\rho _{1}^{2})_{m}}$.
\end{proof}

\begin{proof}[Proof of Lemma \protect\ref{UogCarl}]
i) First notice that $\gamma _{0,0}\left( x,y|\rho ,q\right) f_{N}\left(
x|q\right) \allowbreak =\allowbreak f_{CN}\left( x|y,\rho ,q\right) $
(compare \ref{P-M}). Besides we will use assertions i) and ii) of
Proposition \ref{znane}. Since for $\forall x,y\in S\left( q\right) ,$ $%
\gamma _{0,0}\left( x,y|\rho ,q\right) >0$ we can write $\int_{S\left(
q\right) }P_{n}\left( x|y,\rho ,q\right) \gamma _{m,k}\left( x,y|\rho
,q\right) f_{N}\left( x|q\right) dx=\allowbreak $

$=\allowbreak \int_{S\left( q\right) }P_{n}\left( x|y,\rho ,q\right) \frac{%
\gamma _{m,k}\left( x,y|\rho ,q\right) }{\gamma _{0,0}(x,y|\rho ,q)}%
f_{CN}\left( x|y,\rho ,q\right) dx$. \newline
Now 
\begin{eqnarray*}
&&\int_{S\left( q\right) }P_{n}\left( x|y,\rho ,q\right) \gamma _{m,k}\left(
x,y|\rho ,q\right) f_{N}\left( x|q\right) dx\allowbreak \\
&=&\allowbreak \sum_{i\geq 0}\frac{\rho ^{i}}{\left[ i\right] _{q}!}%
H_{i+k}\left( y|q\right) \int_{S\left( q\right) }P_{n}\left( x|y,\rho
,q\right) H_{i+m}\left( x|q\right) f_{N}\left( x|q\right) dx.
\end{eqnarray*}%
Let us recall the assertion i) of Proposition \ref{connection}. Hence we
have 
\begin{eqnarray*}
&&\int_{S\left( q\right) }P_{n}\left( x|y,\rho ,q\right) \gamma _{m,k}\left(
x,y|\rho ,q\right) f_{N}\left( x|q\right) dx\allowbreak \\
&=&\allowbreak \sum_{i\geq 0}\frac{\rho ^{i}}{\left[ i\right] _{q}!}%
H_{i+k}\left( y|q\right) \sum_{j=0}^{n}\QATOPD[ ] {n}{j}_{q}\rho
^{n-j}B_{n-j}\left( y|q\right) \int_{S\left( q\right) }H_{j}\left(
x|q\right) H_{i+m}\left( x|q\right) f_{N}\left( x|q\right) dx\allowbreak .
\end{eqnarray*}%
Obviously if $m>n\allowbreak $ we get $0$. Otherwise when $n\geq m$ we
obtain: 
\begin{eqnarray*}
&&\int_{S\left( q\right) }P_{n}\left( x|y,\rho ,q\right) \gamma _{m,k}\left(
x,y|\rho ,q\right) f_{N}\left( x|q\right) dx\allowbreak \\
&=&\allowbreak \frac{\left[ n\right] _{q}!\rho ^{n-m}}{\left[ n-m\right]
_{q}!}\sum_{i=0}^{n-m}\frac{\left[ n-m\right] _{q}!}{\left[ i\right]
_{q}![n-i-m]}H_{i+k}\left( y|q\right) B_{n-i-m}(y|q)\allowbreak \\
&=&\allowbreak \frac{\left[ n\right] _{q}!\rho ^{n-m}}{\left[ n-m\right]
_{q}!}I_{n-m,k}\left( y|q\right) \allowbreak =\allowbreak (-1)^{n-m}q^{%
\binom{n-m}{2}}\frac{\left[ n\right] _{q}!\rho ^{n-m}\left[ k\right] _{q}!}{%
\left[ n-m\right] _{q}!\left[ k+m-n\right] _{q}!}H_{k+m-n}(y|q).
\end{eqnarray*}%
Hence $\frac{\gamma _{m,k}\left( x,y|\rho ,q\right) }{\gamma
_{0,0}(x,y,|\rho ,q)}\allowbreak =\allowbreak \sum_{n=m}^{m+k}(-1)^{n-m}q^{%
\binom{n-m}{2}}\rho ^{n-m}\QATOPD[ ] {k}{n-m}_{q}H_{k-(n-m)}\left(
y|q\right) P_{n}\left( x|y,\rho ,q\right) /\left( \rho ^{2}\right) _{n}$ or
equivalently $\frac{\gamma _{m,k}\left( x,y|\rho ,q\right) }{\gamma
_{0,0}(x,y|\rho ,q)}\allowbreak =\allowbreak \sum_{s=0}^{k}(-1)^{s}q^{\binom{%
s}{2}}\QATOPD[ ] {k}{s}_{q}\rho ^{s}H_{k-s}\left( y|q\right)
P_{m+s}(x|y,\rho ,q)/(\rho ^{2})_{m+s}$.

ii) We have 
\begin{gather*}
C_{n}\left( x,y|\rho _{1},\rho _{2},q\right) \allowbreak =\allowbreak 
\newline
\frac{1}{\gamma _{0,0}\left( x,y,\rho _{1}\rho _{2}|q\right) }\sum_{i=0}^{n}%
\QATOPD[ ] {n}{i}_{q}\rho _{1}^{n-i}\rho _{2}^{i}\gamma _{i,n-i}\left(
x,y,\rho _{1}\rho _{2}|q\right) \allowbreak \\
=\allowbreak \frac{1}{\gamma _{0,0}\left( x,y,\rho _{1}\rho _{2}|q\right) }%
\sum_{i=0}^{n}\QATOPD[ ] {n}{i}_{q}\rho _{1}^{n-i}\rho
_{2}^{i}\sum_{j=0}^{n-i}(-1)^{j}\QATOPD[ ] {n-i}{j}_{q}q^{\binom{j}{2}}\rho
_{1}^{j}\rho _{2}^{j}H_{n-i-j}\left( y|q\right) P_{i+j}\left( x|y,\rho
_{1}\rho _{2},q\right) /(\rho _{1}^{2}\rho _{2}^{2})_{i+j}\allowbreak \\
=\allowbreak \sum_{s=0}^{n}\QATOPD[ ] {n}{s}_{q}H_{n-s}\left( y|q\right)
P_{s}\left( x|y,\rho _{1}\rho _{2},q\right) /(\rho _{1}^{2}\rho
_{2}^{2})_{s}\sum_{j=0}^{s}(-1)^{j}\QATOPD[ ] {s}{j}_{q}q^{\binom{j}{2}}\rho
_{1}^{j}\rho _{2}^{j}\rho _{1}^{n-s+j}\rho _{2}^{s-j}.
\end{gather*}%
$\allowbreak $ Now using formula (12.2.27) of \cite{IA}, that is $\left(
a\right) _{n}\allowbreak =\allowbreak \sum_{k=0}^{n}\left( -1\right) ^{k}%
\QATOPD[ ] {n}{k}_{q}q^{\binom{k}{2}}a^{k}$ we get $C_{n}\left( x,y|\rho
_{1},\rho _{2},q\right) \allowbreak =\allowbreak \sum_{s=0}^{n}\allowbreak 
\QATOPD[ ] {n}{s}_{q}\rho _{1}^{n-s}\rho _{2}^{s}\left( \rho _{1}^{2}\right)
_{s}H_{n-s}\left( y|q\right) P_{s}\left( x|y,\rho _{1}\rho _{2},q\right)
/(\rho _{1}^{2}\rho _{2}^{2})_{s}$
\end{proof}

\begin{proof}[Proof of Theorem \protect\ref{main}]
\begin{gather*}
C_{n}\left( y,z|\rho _{1},\rho _{2},q\right) \allowbreak =\allowbreak
\sum_{s=0}^{n}\allowbreak \QATOPD[ ] {n}{s}_{q}\rho _{1}^{n-s}\rho
_{2}^{s}\left( \rho _{1}^{2}\right) _{s}H_{n-s}\left( y|q\right) P_{s}\left(
z|y,\rho _{1}\rho _{2},q\right) /(\rho _{1}^{2}\rho _{2}^{2})_{s}\allowbreak
\\
=\sum_{s=0}^{n}\allowbreak \QATOPD[ ] {n}{s}_{q}\rho _{1}^{n-s}\rho
_{2}^{s}\left( \rho _{1}^{2}\right) _{s}H_{n-s}\left( y|q\right)
\sum_{j=0}^{s}\QATOPD[ ] {s}{j}_{q}\rho _{1}^{s-j}\rho
_{2}^{s-j}B_{s-j}\left( y|q\right) H_{j}\left( z|q\right) /\left( \rho
_{1}^{2}\rho _{2}^{2}\right) _{s} \\
=\frac{1}{\left( \rho _{1}^{2}\rho _{2}^{2}\right) _{n}}\sum_{j=0}^{n}\QATOPD%
[ ] {n}{j}_{q}\rho _{1}^{n-j}H_{j}\left( z|q\right) \sum_{s=j}^{n}\QATOPD[ ]
{n-j}{s-j}_{q}\left( \rho _{1}^{2}\right) _{s}\rho _{2}^{2s-j}\left( \rho
_{1}^{2}\rho _{2}^{2}q^{s}\right) _{n-s}B_{s-j}\left( y|q\right)
H_{n-s}\left( y|q\right) \\
\frac{1}{\left( \rho _{1}^{2}\rho _{2}^{2}\right) _{n}}\sum_{j=0}^{n}\QATOPD[
] {n}{j}_{q}\rho _{1}^{n-j}\rho _{2}^{j}H_{j}\left( z|q\right)
\sum_{m=0}^{n-j}\QATOPD[ ] {n-j}{m}_{q}\left( \rho _{1}^{2}\right)
_{m+j}\rho _{2}^{2m}\left( \rho _{1}^{2}\rho _{2}^{2}q^{m+j}\right)
_{n-j-m}B_{m}\left( y|q\right) H_{n-j-m}\left( y|q\right) .
\end{gather*}%
Now we apply the formula given in the assertion iv of Lemma \ref{BH} getting 
\begin{gather*}
C_{n}\left( y,z|\rho _{1},\rho _{2},q\right) \allowbreak =\frac{1}{\left(
\rho _{1}^{2}\rho _{2}^{2}\right) _{n}}\QATOPD[ ] {n}{j}_{q}\rho
_{1}^{n-j}\rho _{2}^{j}\left( \rho _{1}^{2}\right) _{j}H_{j}\left( z|q\right)
\\
\times \sum_{m=0}^{n-j}\QATOPD[ ] {n-j}{m}_{q}\left( \rho
_{1}^{2}q^{j}\right) _{m}\rho _{2}^{2m}\left( \rho _{1}^{2}\rho
_{2}^{2}q^{m+j}\right) _{n-j-m} \\
\times \left( -1\right) ^{m}q^{\binom{m}{2}}\sum_{k=0}^{\left\lfloor
(n-j)/2\right\rfloor }\QATOPD[ ] {m}{k}_{q}\QATOPD[ ] {n-j-k}{k}_{q}\left[ k%
\right] _{q}!q^{-k(m-k)}H_{n-j-2k}(y|q)\allowbreak .
\end{gather*}%
Now we notice that $\QATOPD[ ] {m}{k}_{q}\allowbreak =\allowbreak 0$ if $k>m$%
. So we split the range of $m$ into two subranges $0,\ldots ,\left\lfloor
(n-j)/2\right\rfloor $ and $\left\lfloor (n-j)/2\right\rfloor +1,\ldots ,n-j$%
. Thus the second sum can be transformed in the following way:%
\begin{gather*}
\sum_{m=0}^{\left\lfloor (n-j)/2\right\rfloor }\QATOPD[ ] {n-j}{m}_{q}\left(
\rho _{1}^{2}q^{j}\right) _{m}\rho _{2}^{2m}\left( \rho _{1}^{2}\rho
_{2}^{2}q^{m+j}\right) _{n-j-m}\left( -1\right) ^{m}q^{\binom{m}{2}}\times \\
\sum_{k=0}^{m}\QATOPD[ ] {m}{k}_{q}\QATOPD[ ] {n-j-k}{k}_{q}\left[ k\right]
_{q}!q^{-k(m-k)}H_{n-j-2k}(y|q)+ \\
\sum_{m=\left\lfloor (n-j)/2\right\rfloor +1}^{n-j}\QATOPD[ ] {n-j}{m}%
_{q}\left( \rho _{1}^{2}q^{j}\right) _{m}\rho _{2}^{2m}\left( \rho
_{1}^{2}\rho _{2}^{2}q^{m+j}\right) _{n-j-m}\times \\
\left( -1\right) ^{m}q^{\binom{m}{2}}\sum_{k=0}^{\left\lfloor
(n-j)/2\right\rfloor }\QATOPD[ ] {m}{k}_{q}\QATOPD[ ] {n-j-k}{k}_{q}\left[ k%
\right] _{q}!q^{-k(m-k)}H_{n-j-2k}(y|q).
\end{gather*}%
Now after changing the order of summation we obtain:%
\begin{gather*}
\sum_{k=0}^{\left\lfloor (n-j)/2\right\rfloor }\QATOPD[ ] {n-j-k}{k}_{q}%
\left[ k\right] _{q}!H_{n-j-2k}(y|q)\times \\
\sum_{m=k}^{\left\lfloor (n-j)/2\right\rfloor }\allowbreak \left( -1\right)
^{m}q^{\binom{m}{2}}q^{-k(m-k)}\QATOPD[ ] {n-j}{m}_{q}\QATOPD[ ] {m}{k}%
_{q}\left( \rho _{1}^{2}q^{j}\right) _{m}\rho _{2}^{2m}\left( \rho
_{1}^{2}\rho _{2}^{2}q^{m+j}\right) _{n-j-m} \\
+\sum_{k=0}^{\left\lfloor (n-j)/2\right\rfloor }\QATOPD[ ] {n-j-k}{k}_{q}%
\left[ k\right] _{q}!H_{n-j-2k}(y|q)\times \\
\sum_{m=\left\lfloor (n-j)/2\right\rfloor +1}^{n-j}\left( -1\right) ^{m}q^{%
\binom{m}{2}}q^{-k(m-k)}\QATOPD[ ] {n-j}{m}_{q}\QATOPD[ ] {m}{k}_{q}\left(
\rho _{1}^{2}q^{j}\right) _{m}\rho _{2}^{2m}\left( \rho _{1}^{2}\rho
_{2}^{2}q^{m+j}\right) _{n-j-m}) \\
=\sum_{k=0}^{\left\lfloor (n-j)/2\right\rfloor }\QATOPD[ ] {n-j-k}{k}_{q}%
\left[ k\right] _{q}!H_{n-j-2k}(y|q)\times \\
\sum_{m=k}^{n-j}\allowbreak \left( -1\right) ^{m}q^{\binom{m}{2}}q^{-k(m-k)}%
\QATOPD[ ] {n-j}{m}_{q}\QATOPD[ ] {m}{k}_{q}\left( \rho _{1}^{2}q^{j}\right)
_{m}\rho _{2}^{2m}\left( \rho _{1}^{2}\rho _{2}^{2}q^{m+j}\right) _{n-j-m}.
\end{gather*}%
After changing in the last sum the variable $m$ ranging from $k,\ldots ,m-j$
to $s$ ranging from $0$ to $n-j-k$ and applying firstly formula $\binom{s+k}{%
2}-sk\allowbreak =\allowbreak \binom{s}{2}\allowbreak +\allowbreak \binom{k}{%
2},$ then formula $(a)_{n+m}\allowbreak =\allowbreak \left( a\right)
_{n}\left( aq^{n}\right) _{m}$ and finally assertion ii) of Lemma \ref%
{nawiasy} we get

\begin{eqnarray*}
C_{n}\left( y,z|\rho _{1},\rho _{2},q\right) &=&\frac{1}{\left( \rho
_{1}^{2}\rho _{2}^{2}\right) _{n}}\sum_{j=0}^{n}\QATOPD[ ] {n}{j}_{q}\rho
_{1}^{n-j}\rho _{2}^{j}H_{j}\left( z|q\right) \allowbreak \times \\
&&\sum_{k=0}^{\left\lfloor (n-j)/2\right\rfloor }(-1)^{k}q^{\binom{k}{2}%
}\rho _{2}^{2k}\left( \rho _{1}^{2}\right) _{k+j}\left( \rho _{2}^{2}\right)
_{n-j-k}\frac{\left[ n-j\right] _{q}!}{\left[ n-j-2k\right] _{q}!}%
H_{n-j-2k}(y|q).
\end{eqnarray*}%
Now we change again the order of summing, applying formulae $\left( a\right)
_{n+m}\allowbreak =\allowbreak \left( a\right) _{n}\left( aq^{n}\right) _{m}$
applied to $\left( \rho _{1}^{2}\right) _{k+j}$ and $\left( \rho
_{2}^{2}\right) _{n-j-k}$ we get%
\begin{eqnarray*}
C_{n}(y,z|\rho _{1},\rho _{2},q) &=&\frac{1}{\left( \rho _{1}^{2}\rho
_{2}^{2}\right) _{n}}\sum_{k=0}^{\left\lfloor n/2\right\rfloor }(-1)^{k}q^{%
\binom{k}{2}}\QATOPD[ ] {n}{2k}_{q}\QATOPD[ ] {2k}{k}_{q}\left[ k\right]
_{q}!\rho _{2}^{2k}\rho _{1}^{2k}\left( \rho _{1}^{2},\rho _{2}^{2}\right)
_{k}\times \\
&&\sum_{j=0}^{n-2k}\QATOPD[ ] {n-2k}{j}_{q}\left( \rho _{1}^{2}q^{k}\right)
_{j}\left( \rho _{2}^{2}q^{k}\right) _{n-j-2k}\rho _{1}^{n-2k-j}\rho
_{2}^{j}H_{j}\left( z|q\right) H_{n-j-2k}(y|q).
\end{eqnarray*}
\end{proof}

\begin{proof}[Proof of Theorem \protect\ref{expansion}]
For $\left\vert q\right\vert <1$ we use the assertion vii) of Proposition %
\ref{znane} and Remark \ref{U1} and deduce that $\phi \left( x|y,\rho
_{1},z,\rho _{2},q\right) /f_{N}\left( x|q\right) $ is bounded on $S\left(
q\right) $ hence square integrable with respect to the measure with density $%
f_{N}\left( x|q\right) ,$ thus immediately we get $L_{2}$ convergence in (%
\ref{density_exp.}). To get almost sure convergence let us notice that $\phi
\left( x|y,\rho _{1},z,\rho _{2},q\right) /f_{N}\left( x|q\right) $ is also
square integrable with respect to the measure that has density equal to $%
f_{N}\left( x|q\right) f_{N}\left( y|q\right) f_{N}\left( z|q\right) .$ Next
we notice that polynomials $\left\{ H_{i}\left( x|q\right) H_{j}\left(
y|q\right) H_{k}\left( z|q\right) \right\} _{i,j,k\geq 0}$ constitute an
orthogonal basis of the space $(S\left( q\right) \allowbreak \times
\allowbreak S\left( q\right) \allowbreak \times \allowbreak S\left( q\right)
,\allowbreak \mathcal{B},\allowbreak f_{N}\left( x|q\right) f_{N}\left(
y|q\right) f_{N}\left( z|q\right) ),$ where $\mathcal{B}$ denotes $\sigma -$%
field of Borel subsets of $S\left( q\right) \allowbreak \times \allowbreak
S\left( q\right) \allowbreak \times \allowbreak S\left( q\right) .$ Moreover
we know Fourier coefficients of expansion of $\phi \left( x|y,\rho
_{1},z,\rho _{2},q\right) /f_{N}\left( x|q\right) $ in this basis. Namely we
can read them from expansion (\ref{_C1},\ref{_C2}). They are equal to 
\begin{gather*}
\alpha _{n,j,m}=\int_{S^{3}\left( q\right) }H_{n}\left( x|q\right)
H_{j}\left( y|q\right) H_{m}\left( z|q\right) \phi \left( x|y,\rho
_{1},z,\rho _{2},q\right) f_{N}\left( y|q\right) f_{N}\left( z|q\right)
dxdydz= \\
=\left\{ 
\begin{array}{ccc}
0 & if & j+m\geq n\vee n-j-m\text{ is odd} \\ 
\left( -1\right) ^{k}\frac{q^{\binom{k}{2}}\rho _{1}^{n-j}\left( \rho
^{2}\right) _{j+k}\rho _{2}^{n-m}\left( \rho _{2}^{2}\right) _{n-j-k}}{\left[
k\right] _{q}!\left( \rho _{1}^{2}\rho _{2}^{2}\right) _{n}} & if & n-j-m=2k%
\end{array}%
\right. .
\end{gather*}%
From the theory of the orthogonal series expansions it follows that $%
\sum_{n,j,m}\alpha _{n,j,m}^{2}<\infty $, moreover one can see these
coefficients decrease geometrically. \newline
Hence $\sum_{n,j,m}\alpha _{n,j,m}^{2}\left( \log n\log j\log m\right)
^{2}<\infty $ and thus form the Rademacher-Menshov theorem we get almost
everywhere convergence of the series:%
\begin{equation*}
\sum_{n,j,m\geq 0}\frac{\alpha _{n,j,m}}{\left[ n\right] _{q}!\left[ j\right]
_{q}!\left[ m\right] _{q}!}H_{n}\left( x|q\right) H_{j}\left( y|q\right)
H_{m}\left( z|q\right) .
\end{equation*}%
On the other hand after regrouping nonzero summands of this series we get (%
\ref{density_exp.}).

For $q\allowbreak =\allowbreak 1$ we deal with the normal case. In this case
the functions $C_{n}$ have special form given by (\ref{c_n_q=1}). Thus we
deal with summing of special form of a classical Poisson--Mehler kernel%
\begin{equation*}
\sum_{n\geq 0}\frac{t^{n}}{n!}H_{n}\left( x\right) H_{n}\left( u\right) ,
\end{equation*}%
where $t\allowbreak =\allowbreak \sqrt{\frac{\rho _{1}^{2}+\rho
_{2}^{2}-2\rho _{1}^{2}\rho _{2}^{2}}{1-\rho _{1}^{2}\rho _{2}^{2}}}$ and $%
u\allowbreak =\allowbreak \frac{y\rho _{1}\left( 1-\rho _{2}^{2}\right)
+z\rho _{2}\left( 1-\rho _{1}^{2}\right) }{\sqrt{1-\rho _{1}^{2}\rho _{2}^{2}%
}\sqrt{\rho _{1}^{2}+\rho _{2}^{2}-2\rho _{1}^{2}\rho _{2}^{2}}}.$
\end{proof}

\end{document}